\newtheorem{theorem}{Theorem}[section]
\newtheorem{lemma}{Lemma}[section]
\newtheorem{remark}{Remark}[section]
\newenvironment{proof}[1][Proof]{\noindent\textbf{#1.} }{\hfill $\Box$}
\journal{Mathematical Analysis and Applications}
\begin{document}

\begin{frontmatter}

\title{Mathematical analysis of an age structured problem modeling phenotypic plasticity in mosquito behaviour}
%\tnotetext[mytitlenote]{Fully documented templates are available in the elsarticle package on \href{http://www.ctan.org/tex-archive/macros/latex/contrib/elsarticle}{CTAN}.}

%% Group authors per affiliation:
\author{Lin Lin Li}
  \address{Institut de Math\'ematiques de Bordeaux, Universit\'e de Bordeaux, Bordeaux, France}
  \ead{linlin.li@u-bordeaux.fr}

\author{Cl\'audia Pio Ferreira}
\address{S\~ao Paulo State University (UNESP), Institute of Biosciences, Department of Biostatistics, Botucatu, Brazil}
%Departamento de Bioestat\'{\i}stica,   IBB, Unesp, Botucatu-SP, Brasil}
   \ead{pio@ibb.unesp.br}
\author{Bedreddine Ainseba \corref{mycorrespondingauthor}}
\cortext[mycorrespondingauthor]{Corresponding author}
\ead{bedreddine.ainseba@u-bordeaux.fr}

\address[mymainaddress]{Institut de Math\'ematiques de Bordeaux, Universit\'e de Bordeaux, Bordeaux, France}
%\address[mysecondaryaddress]{360 Park Avenue South, New York}

\begin{abstract}
This paper presents an age structured problem modelling mosquito blood-feeding plasticity  in a natural environment. We first investigate the analytical asymptotic solution through studying the spectrum of an operator $\mathbb{A}$ which is the infinitesimal generator of a $C_0$-semigroup. Indeed, the study of the spectrum of $\mathbb{A}$ {\it per se} is interesting. Additionally, we get the existence and nonexistence of nonnegative steady solutions under some conditions.
\end{abstract}

\begin{keyword} blood-feeding behaviour \sep infinitesimal generator \sep asymptotic behaviour
%\MSC[2010] 00-01\sep  99-00
\end{keyword}

\end{frontmatter}

\linenumbers

\section{Introduction}

Malaria is an infectious disease caused by a species of parasite that belongs to the genus Plasmodium. This pathology
affects millions of people over the world, being predominant in equatorial region, e.g., Amazon rainforest, sub-saharan Africa and
South East Asia. The Plasmodium is transmitted by female Anopheles mosquitoes when they bite and, thus, feed on human
blood. Control mechanims acting on disease dynamics take into account the behaviourally characteristics of mosquito population, such as anthrophagy, endophily, endophagy, physiological susceptibility to pyrethroids, and night-biting preference.  The recent reports on Malaria transmission shown that the long-term use of residual spraying (IRs) and insecticide-treated nets (ITNs) has been driving mosquito physiological and behavioural resistance. Many mosquito species exhibit high levels of phenotypic plasticity that can be expressed on host preference, biting activity, etc.  Such heritable phenotypic plasticity allows individuals mosquitoes to flexibly adapt their behaviour according to the environmental conditions. The development of a crepuscular, outdoor feeding phenotype among anopheline population has been observed in areas of intensive use of IRs and ITNs. This adjust on biting time can jeopardize the sucesseful of Malaria control and promotes parasite evolution \cite{Gatton2013}.  Hence, researches on the population dynamics of mosquitoes become essential.

In this paper, we propose a partial differential equation system to model the plasticity of mosquitoes  in a natural environment, namely without any intervention of human activities, such as IRs and ITNs.
Let $p(a,t,x)$ be the distribution of individual mosquitoes of age $a\ge 0$ at time $t\ge 0$ and biting activity at time $x \in [0,24] $. The introduction of the variable $x$ in the system has the objective of illustrate mosquito biting behaviour, which will be of great importance in the following research on mosquitoes control.
Let $a_{\dagger }$ be the life expectancy of an individual mosquito and $T$ be a positive constant.
Let $\beta (a)\ge 0$ be the natural fertility-rate which is bounded, nonnegative and measurable on $[0,a_{\dagger }]$, and $\mu (a)\ge 0$ be the natural mortality-rate of mosquitoes of age $a$. The new generation of mosquitoes can adapt to ensure its survival and reproduction, changing the biting time in order to maximize its fitness.
This is modeled by the kernel function $K(x,s)$ in the renewal equation. The parameter $\eta$ is the maximum difference on biting time that the new generation can reach. Mosquitoes can change their biting time up to a diffusive coefficient $\delta$.
Thus, the evolution of the distribution $p(a,t,x)$ is governed by the system
\begin{equation}\label {eq:2929}
\left\{ \begin{array}{lll}
Dp- \delta \Delta _{x}p+\mu (a)p = 0,&(a,t,x)\in Q_{a_\dagger}, \\
p(a,t,0) =p(a,t,24),&(a,t)\in (0,a_\dagger)\times (0,T), \\
\partial_x p(a,t,0) =\partial_x  p(a,t,24),&(a,t)\in (0,a_\dagger)\times (0,T), \\
\displaystyle p(0,t,x)=\displaystyle \int_0^{a_\dagger} \beta(a)\displaystyle \int_{x-\eta}^{x+\eta}K(x,s)p(a,t,s)ds da,&(t,x)\in (0,T)\times (0,24), \\
\displaystyle p(a,0,x)=p_0(a,x) , &(a,x)\in (0,{a_\dagger} ) \times (0,24).
          \end{array}
  \right. \end{equation}
where $Q_{{a_\dagger}}=(0,{a_\dagger} ) \times (0,T) \times (0,24)$, $\Delta _{x}p(a,t,x)= \partial_{xx}  p(a,t,x)$, the kernel
\begin{equation*}\label{eq:02}
K(x,s)= \begin{cases}
   {(x-s)^{2}}{e^{-(x-s)^{2}}}, &s \in (0,24), \\
   0,  &\text{else},
   \end{cases}
\end{equation*}
and $D p(a,t,x)$ is the directional derivative of $p$ with respect to  direction $\left(
1,1,0\right)$, that is,
\[
Dp(a,t,x)={\lim_{\varepsilon \rightarrow 0} }\frac{p\left(
a+\varepsilon ,t+\varepsilon, x \right) -p\left( a,t,x\right) }{\varepsilon }.
\]
For  smooth enough $p$, it is obvious that
\[
Dp=\frac {\partial p}{\partial t}+\frac {\partial p}{\partial a}.
\]

Notice that in our model, the boundary condition is assumed to be periodic and the fertility term is nonlocal with the kernel $K(x,s)$. In fact, both  Dirichlet boundary condition and local fertility term are very popular in mathematical modeling, such as dynamics population models of a single species with age dependence and spatial structure. We now review some known results about such models, that is, replacing the periodic boundary condition and the fertility term by the Dirichlet condition and $\displaystyle \int_0^{a_\dagger} \beta(a)p(a,t,x)da$ respectively. Chan and Guo \cite{Chan} considred this model in the semigroup framework, by setting the fertility-rate $\beta$ and the mortality-rate $\mu$ being independent of the space variable $x$. They identified the infinitesimal generator and studied its spectral properties, which could be used to get the asymptotic behavior of the solutions. Then, Guo and Chan \cite{Guo} removed the independence setting of $\beta$, $\mu$ and  got the asymptotic expression of the solution by analyzing the spectrum of the infinitesimal generator. We also refer to the works of Langlais \cite{Langlais}, for the study of the long-time behaviour of the model where $\beta$ and $\mu$ depend on the distribution $p$.  The controllability problems on this model are also very attractive. Ainseba and Ani\c{t}a \cite{Ains3, As} studied the local exact controllability of such model with the Dirichlet boundary condition and the local fertility term. The control problem with Neumann boundary condition can be referred to \cite{Ains1,Ains2}.

We are interested in the ways on which Guo and Chan \cite{Chan,Guo} studied the asymptotic behaviour of the population model in \cite{Chan,Guo} throught the  analysis of the spectrum of the infinitesimal generator and using some positive semigroup theories.  In this paper, we mainly focus on the asymptotic behavior in Section 2. The key step for our paper is to find, for any initial $p_0(a,x) \in D(\mathbb{A}) $, the asymptotic expression $p(a,t,x)$.

Before presenting our results, we  introduce some usefull notations. Consider $X={L^{2}((0,{a_{\dagger}}) \times (0,24))}$ with the usual norm, and the operator $\mathbb{A}:X \longrightarrow X$  defined as
\begin{equation}\label{eq:2}
\mathbb{A}\phi(a,x)=- \frac{\partial\phi(a,x) }{\partial a}+\delta\Delta \phi(a,x)-\mu(a)\phi(a,x),\forall\phi(a,x)\in D(\mathbb{A}),
\end{equation}
where
\begin{align}\label{eq:3}
D(\mathbb{A})=\{&\phi(a,x)| \phi,\mathbb{A}\phi \in X, \phi(a,0)=\phi(a,24),\partial_x \phi(a,0) =\partial_x  \phi(a,24), \nonumber\\
&\phi(0,x)=\displaystyle \int_0^{a_\dagger} \beta(a)\displaystyle \int_{x-\eta}^{x+\eta}K(x,s)\phi(a,s)ds  da\}.
\end{align}
From the definition of the operator $\mathbb{A}$,  the system \eqref{eq:2929} can be transformed into an evolutionary equation on the space $X$:
\begin{eqnarray*}
\left\{\begin{aligned}
&\frac{d p(a,t,x)}{dt}=\mathbb{A}p(a,t,x),\\
&p(a,0,x)=p_{0}(a,x).
\end{aligned}
\right.
\end{eqnarray*}

For the following notations, we can refer to Marek \cite[p.609]{IVO} and Clement \cite{Clement} for instance. If $\mathbb{A}$ is a linear operator from $X$ into $X$, then $\rho(\mathbb{A})$ denotes the resolvent set of $\mathbb{A}$, that is, $\rho(\mathbb{A})$ is the set of all complex numbers $\lambda$ for which $(\lambda \mathbb{I}-\mathbb{A})^{-1}$ is a bounded automorphism of $\mathbb{A}$ (let $R(\lambda,A)=(\lambda \mathbb{I}-\mathbb{A})^{-1}$ called the resolvent operator), where $\mathbb{I}$ denotes the identity operator. The complement of $\rho(\mathbb{A})$ in the complex plane is the spectrum of $\mathbb{A}$, and it is denoted by $\sigma(\mathbb{A})$. We denote by $\gamma(\mathbb{A})$ the spectral radius of $\mathbb{A}$, that is,
$$\gamma(\mathbb{A})=\sup\{|\lambda|:\ \lambda\in \sigma(\mathbb{A})\}.$$
If $\mathbb{A}$ is an infinitesimal generator of a $C_{0}$-semigroup $T(t)$ on the space $X$, the spectral bound $s(\mathbb{A})$ can be denoted by
$$s(\mathbb{A})=\sup\{|\lambda|:\ Re\lambda\in \sigma(\mathbb{A})\}.$$
 And the growth bound of the semigroup $T(t)$ can be shown as
$$\omega(\mathbb{A})=\inf_{t>0} \frac{1}{t} \log \|T(t)\|_{L^2((0,a_\dagger)\times(0,24))}=\underset{t \rightarrow +\infty}\lim \frac{1}{t} \log \|T(t)\|_{L^2((0,a_\dagger)\times(0,24))}.$$

Let $a_{\dagger}$ be a finite positive number. From the biological point of view addressed in \cite{Garr,Gurt,Webb}, we assume the following assumptions throughout this paper:
\begin{description}
\item[(J1)]$\mu(a)\in L^{1}_{loc}([0,{a_{\dagger}}))$ and $\displaystyle \int_0^{a_\dagger} {\mu}(\rho)d \rho = \infty $;
\item[(J2)]$\beta(a) \in L^{\infty}((0,{a_{\dagger}}))$, $\text{mes}\{a| a \in [0,{a_{\dagger}}],\beta(a)>0\}>0$;
\item[(J3)]$p_{0}(a,x)\in L^{\infty}((0,{a_\dagger} ) \times (0,24))$, $p_{0}(a,x)\geq 0$.
\end{description}

The following theorems are the main results of our paper and they will be proved in the following sections.
\begin{theorem}\label{th2}
For any initial $p_0(a,x) \in D(\mathbb{A}) $, the semigroup solution of \eqref{eq:2929}  has the following asymptotic expression:
\begin{align*}
p(a,t,x)=&e^{\lambda_{0}t}e^{-\lambda_{0}a}\mathscr{T}(0,a)C_{\lambda_{0}}\displaystyle \int_0^{a_\dagger} \beta(a)\displaystyle \int_{x-\eta}^{x+\eta}K(x,s)\displaystyle \int_0^{a}e^{-\lambda_{0}(a-\sigma)}\mathscr{T}(\sigma,a)\\&p_{0}(\sigma,s)dsdad\sigma+o(e^{(\lambda_{0}-\epsilon)t}),
\end{align*}
where $\lambda_{0}$, $C_{\lambda_{0}}$and $\mathscr{T}(\tau,s)$ will be defined in Section 2.
\end{theorem}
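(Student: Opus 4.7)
The plan is to represent the semigroup solution $p(\cdot,t,\cdot)=T(t)p_0$ via the Dunford inverse-Laplace representation of the resolvent $R(\lambda,\mathbb{A})$ and then deform the Bromwich contour past the leading spectral value $\lambda_0$ supplied by the spectral analysis of Section~2. First I would solve the resolvent equation $(\lambda\mathbb{I}-\mathbb{A})\phi=f$ in closed form. Integrating along the characteristic direction $(1,1,0)$ and using the evolution family $\mathscr{T}(\tau,a)$ associated with the spatial operator $\delta\Delta_x-\mu(a)$ subject to the periodic conditions, one gets
\begin{equation*}
\phi(a,x)=e^{-\lambda a}\mathscr{T}(0,a)\phi(0,\cdot)(x)+\int_0^a e^{-\lambda(a-\sigma)}\mathscr{T}(\sigma,a)f(\sigma,\cdot)(x)\,d\sigma.
\end{equation*}
Substituting into the nonlocal renewal condition defining $D(\mathbb{A})$ yields an operator equation $(I-B(\lambda))\phi(0,\cdot)=G(\lambda)f$, from which $R(\lambda,\mathbb{A})$ is read off. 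In this representation $\lambda_0$ is the dominant real zero of $I-B(\lambda)$, and $C_{\lambda_0}$ should emerge as the leading Laurent coefficient of $(I-B(\lambda))^{-1}$ at $\lambda_0$.

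Next I would invoke the spectral information on $\mathbb{A}$: $\lambda_0$ is real, simple, and strictly dominant in the sense that there exists $\epsilon>0$ with $\sigma(\mathbb{A})\cap\{\mathrm{Re}\,\lambda\geq\lambda_0-\epsilon\}=\{\lambda_0\}$. Combined with the resolvent formula above, this allows the contour shift
\begin{equation*}
T(t)p_0=\mathrm{Res}_{\lambda=\lambda_0}\bigl(e^{\lambda t}R(\lambda,\mathbb{A})p_0\bigr)+\frac{1}{2\pi i}\int_{\mathrm{Re}\,\lambda=\lambda_0-\epsilon}e^{\lambda t}R(\lambda,\mathbb{A})p_0\,d\lambda.
\end{equation*}
Evaluating the residue using the explicit resolvent reproduces the triple-integral expression in the statement: the outer factor $e^{-\lambda_0 a}\mathscr{T}(0,a)$ comes from propagating $\phi(0,\cdot)$ along characteristics at $\lambda=\lambda_0$, the inner kernel $\int_0^a e^{-\lambda_0(a-\sigma)}\mathscr{T}(\sigma,a)p_0(\sigma,s)\,d\sigma$ is the inhomogeneous Duhamel term at $f=p_0$, and $C_{\lambda_0}$ together with $\int_0^{a_\dagger}\beta(a)\int_{x-\eta}^{x+\eta}K(x,s)\,ds\,da$ is the residue of $(I-B(\lambda))^{-1}$ viewed through the fertility functional.

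The hard part will be controlling the remainder integral on the line $\mathrm{Re}\,\lambda=\lambda_0-\epsilon$. To obtain $o(e^{(\lambda_0-\epsilon)t})$ rather than merely $O(e^{(\lambda_0-\epsilon)t})$ one needs decay of $\|R(\lambda,\mathbb{A})p_0\|$ as $|\mathrm{Im}\,\lambda|\to\infty$. This is where the hypothesis $p_0\in D(\mathbb{A})$ is crucial: writing $R(\lambda,\mathbb{A})p_0=\lambda^{-1}p_0+\lambda^{-1}R(\lambda,\mathbb{A})\mathbb{A}p_0$ and combining a uniform resolvent bound on the vertical line with the parabolic smoothing of $\mathscr{T}(\tau,a)$ and a Riemann--Lebesgue-type estimate on $B(\lambda)$ should deliver the required decay. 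A secondary, essentially bookkeeping obstacle is to match the order of the $a$, $\sigma$, $s$ integrations and the placement of $\mathscr{T}(\sigma,a)$ inside $C_{\lambda_0}$ to the precise form displayed in the statement, which should fall out of Fubini once the residue is expanded.
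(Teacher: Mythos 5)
Your overall strategy (explicit resolvent, then Bromwich inversion and a contour shift past $\lambda_0$) is genuinely different from the paper's. The paper never deforms a contour: it first shows, by integrating along characteristics, that $T(t)$ is compact for $t\ge a_\dagger$; eventual compactness gives $\omega_{ess}(\mathbb{A})=-\infty$, hence the spectral gap, the identification $\lambda_0=s(\mathbb{A})=\omega_0(\mathbb{A})$ for the positive semigroup, and allows the authors to quote a ready-made expansion theorem for such semigroups (Theorem 5 of Yu--Guo--Zhu) giving $\|T(t)-T(t)P_{\lambda_{0}}\|\le Ce^{(\lambda_{0}-\epsilon)t}$; the displayed formula then follows from $P_{\lambda_{0}}\phi=\lim_{\lambda\rightarrow\lambda_{0}}(\lambda-\lambda_{0})R(\lambda,\mathbb{A})\phi$ (algebraic simplicity, proved via positivity and semi-nonsupporting operator arguments) combined with the explicit resolvent \eqref{eq:5555}. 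Your resolvent formula and your residue computation coincide with that last step.

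The gap is that the two items you call ``the hard part'' are exactly where the content of the theorem lies, and they are neither carried out nor reducible to the ingredients you list. First, the spectral gap $\sigma(\mathbb{A})\cap\{\mathrm{Re}\,\lambda\ge\lambda_0-\epsilon\}=\{\lambda_0\}$, the simplicity of the pole (needed for $C_{\lambda_0}$ to exist as a limit), and, more fundamentally, the right to read off growth from spectrum are not automatic for a $C_0$-semigroup: without eventual compactness or eventual norm continuity of $T(t)$ --- which you never establish --- the spectrum may accumulate along vertical lines and spectrum-determined growth can fail, so the shifted integral need not even be $O(e^{(\lambda_0-\epsilon)t})$. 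Second, even granting the gap, $p_0\in D(\mathbb{A})$ only yields $\|R(\lambda,\mathbb{A})p_0\|=O(|\mathrm{Im}\,\lambda|^{-1})$ on the line $\mathrm{Re}\,\lambda=\lambda_0-\epsilon$, and that only after you prove a uniform bound for $\|R(\lambda,\mathbb{A})\|$ there (which itself requires an argument, e.g.\ that $\|\mathscr{B}_{\lambda}\|\rightarrow 0$ as $|\mathrm{Im}\,\lambda|\rightarrow\infty$ using the smoothing of $e^{\mathbb{B}a}$); a bound of order $|\mathrm{Im}\,\lambda|^{-1}$ is not integrable, so the Riemann--Lebesgue step that is supposed to upgrade $O(e^{(\lambda_0-\epsilon)t})$ to the claimed $o(e^{(\lambda_0-\epsilon)t})$ is not in place. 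To complete your route you must either prove these vertical-line resolvent estimates in detail, or, much more cheaply, prove eventual compactness of $T(t)$ as the paper does (the characteristic representation of $T(t)$ for $t\ge a_\dagger$ makes this short) and then appeal to the corresponding expansion theorem.
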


The steady state of our model is very important, especially for our further researches about the control problem. The steady state of \eqref{eq:2929} is denoted by $p_{s}$, and should be a solution of
\begin{equation}\label {eq:04}
\left\{ \begin{array}{lll}
\partial_a p_{s}(a,x)- \delta \Delta p_{s}(a,x)+\mu (a)p_{s}(a,x)= 0,&(a,x)\in (0,a_\dagger)\times (0,24), \\
p_{s}(a,0)=p_{s}(a,24),&a\in (0,a_\dagger), \\
\partial_x p_{s}(a,0) =\partial_x  p_{s}(a,24),&a\in (0,a_\dagger), \\
p_{s}(0,x)= \int_0^{a_\dagger} \beta(a) \int_{x-\eta}^{x+\eta}K(x,s)p_{s}(a,s)ds da,&x\in (0,24). \\
\end{array}
\right. \end{equation}
Furthermore, $p_{s}(a,x)$ satisfies
\begin{equation}\label {eq:05}
p_{s}(a,x)\geq 0 \ \ a.e. \  (a,x)\in (0,{a_{\dagger}})\times (0,24).
\end{equation}

\begin{theorem} \label{th3}
Consider \eqref{eq:04} with  $\lambda_{0}$ satisfying  Theorem \ref{th2}.
\begin{description}
\item[(1)] If $\lambda_{0}>0$, then there is no nonnegative solution of \eqref{eq:04} satisfying \eqref{eq:05}.
\item[(2)] If $\lambda_{0}=0$, then there exists infinitely many nontrivial solutions of \eqref{eq:04} satisfying \eqref{eq:05}. Furthermore, for any nonzero steady state $p_s(a,x)$, there exists $\rho_0>0$ such that
$$p_s(a,x)\ge \rho_0>0,\ \text{a.e. }(a,x)\in(0,a_1)\times(0,24),$$
where $a_1\in(0,a_{\dagger})$.
\item[(3)] If $\lambda_{0}<0$, then only trivial solutions $p_{s}$ of \eqref{eq:04} satisfying \eqref{eq:05} exist,
that is
\begin{equation*}
p_{s}(a,x)=0 \ \ a.e. \  (a,x)\in (0,{a_{\dagger}})\times (0,24).
\end{equation*}
\end{description}
\end{theorem}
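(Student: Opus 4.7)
The key observation is that a function $p_s$ satisfies \eqref{eq:04}--\eqref{eq:05} if and only if $p_s$ is a nonnegative element of $\ker(\mathbb{A})$; that is, the steady state problem is exactly the eigenvalue problem $\mathbb{A}\phi=0\cdot\phi$ restricted to the positive cone of $X$. I would therefore rephrase everything in terms of the principal eigenvalue $\lambda_0$ supplied by Theorem~\ref{th2}.

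By integrating the equation $\mathbb{A}\phi=\lambda\phi$ along the characteristics, one reduces it to $\phi(a,\cdot)=e^{-\lambda a}\mathscr{T}(0,a)\phi(0,\cdot)$, and the renewal boundary condition then becomes a fixed-point equation $B_\lambda\phi_0=\phi_0$ on $L^2((0,24))$ for the positive operator
\[
(B_\lambda f)(x)=\int_0^{a_\dagger}\beta(a)\,e^{-\lambda a}\!\int_{x-\eta}^{x+\eta}K(x,s)\bigl[\mathscr{T}(0,a)f\bigr](s)\,ds\,da.
\]
Compactness of $B_\lambda$ (through the smoothing of $\mathscr{T}$) together with positivity and the Krein--Rutman theorem yield a unique, up to positive scaling, nonnegative eigenvector at the spectral radius $r(B_\lambda)$, and $\lambda_0$ is characterized by $r(B_{\lambda_0})=1$. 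The map $\lambda\mapsto r(B_\lambda)$ is strictly decreasing thanks to the factor $e^{-\lambda a}$, so the sign of $\lambda_0$ matches the sign of $r(B_0)-1$.

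Part~(1), $\lambda_0>0$: then $r(B_0)>1$, and any nontrivial nonnegative $p_s$ would produce a nonnegative eigenvector of $B_0$ at eigenvalue $1$, contradicting uniqueness (up to scaling) of the Krein--Rutman eigenvector associated with $r(B_0)\neq 1$. Part~(3), $\lambda_0<0$: then $r(B_0)<1$, so $I-B_0$ is invertible on $L^2((0,24))$ and $\phi_0=0$ is the only solution; equivalently, $\omega(\mathbb{A})=\lambda_0<0$ and Theorem~\ref{th2} applied with $p_0=p_s$ forces $p_s\equiv 0$. Part~(2), $\lambda_0=0$: then $r(B_0)=1$ and Krein--Rutman supplies a nonnegative eigenfunction $\phi_0$ at $1$, out of which $p_s(a,x)=\mathscr{T}(0,a)\phi_0(x)$ is a steady state; every positive multiple is again a steady state, giving infinitely many.

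For the pointwise lower bound in (2), I would factor $\mathscr{T}(0,a)$ as the $L^2$-periodic heat semigroup on $[0,24]$ multiplied by $\exp\bigl(-\int_0^a\mu(\sigma)\,d\sigma\bigr)$, which is strictly positive for $a<a_\dagger$ by~(J1). The periodic heat semigroup is positivity improving for every $a>0$, so $\mathscr{T}(0,a)\phi_0$ is continuous and strictly positive on $(0,a_1]\times[0,24]$ for any $a_1<a_\dagger$; combined with (J2) and the renewal identity, $p_s(0,x)$ is also continuous and strictly positive on $[0,24]$, and a compactness argument then yields $\rho_0>0$ with $p_s\geq\rho_0$ on $(0,a_1)\times(0,24)$. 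The main obstacle is the Krein--Rutman input in parts~(1) and~(2): one has to verify irreducibility of $B_\lambda$, which uses positivity of $K(x,s)$ for $s\neq x$, the smoothing of $\mathscr{T}$, and assumption~(J2); once this is in place, the three cases reduce to the comparison of $r(B_0)$ with $1$.
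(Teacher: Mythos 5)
Your proposal is correct in outline, but it proves parts (1) and (3) by a genuinely different route than the paper. The paper argues dynamically: it regards a steady state $p_s$ as initial datum of the evolution problem, invokes the asymptotic expansion of Theorem \ref{th2}, and concludes that $\|p(\cdot,t,\cdot)\|$ would tend to $+\infty$ (if $\lambda_0>0$) or to $0$ (if $\lambda_0<0$), contradicting time-independence; note that for $\lambda_0>0$ this implicitly requires the leading coefficient in the expansion to be nonzero for nonnegative nontrivial data, a point the paper does not verify. Your static argument sidesteps that: you reduce the steady problem along characteristics to the fixed-point equation $\mathscr{B}_0\phi_0=\phi_0$ (your $B_\lambda$ is exactly the paper's $\mathscr{B}_\lambda$ from \eqref{eq:33}), use the strict monotonicity of $\lambda\mapsto\gamma(\mathscr{B}_\lambda)$ and $\gamma(\mathscr{B}_{\lambda_0})=1$ (both established in the paper via the semi-nonsupporting property \eqref{snp} and Marek's theorem), and then: for $\lambda_0<0$ invertibility of $\mathbb{I}-\mathscr{B}_0$ kills $\phi_0$; for $\lambda_0>0$ you need the Perron--Frobenius fact that a non-supporting compact positive operator admits a nonnegative eigenvector only at its spectral radius --- the clean justification is to pair the equation $\mathscr{B}_0\phi_0=\phi_0$ with the strictly positive eigenfunctional of the dual operator at $\gamma(\mathscr{B}_0)$, which forces $\gamma(\mathscr{B}_0)=1$, a contradiction; stating it as ``uniqueness of the Krein--Rutman eigenvector'' is slightly loose but the needed irreducibility is available. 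For part (2) your existence construction $p_s=c\,\mathscr{T}(0,a)\phi_0$ coincides with the paper's; for the lower bound the paper transforms to $v=e^{\int_0^a\mu}p_s$ and runs the strong maximum principle with a Hopf-type argument at the periodic boundary points $x=0,24$, whereas you use the positivity-improving property of the periodic heat semigroup together with the renewal identity --- substantively equivalent, but make sure you also establish continuity and strict positivity of $p_s(0,\cdot)$ on $[0,24]$ (the paper does this via the explicit kernel estimate on $K$) so that the infimum over $(0,a_1)\times(0,24)$ does not degenerate as $a\to 0^+$. Net effect: your approach is a bit longer on Perron--Frobenius bookkeeping but avoids the unverified nondegeneracy of the projection in the paper's case (1), while the paper's argument is shorter once Theorem \ref{th2} is in hand.
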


The rest of this paper is organized as follows.
In Section 2, we make some preparations
which are necessary in what follows and we prove that $\mathbb{A}$ is an infinitesimal generator of a $C_{0}$-semigroup $T(t)$.
In section 3, we get the asymptotic behavior of \eqref{eq:2929} by analyzing the spectrum of the semigroup $T(t)$. Many abstract theories about semigroups used in this section can be referred to \cite{Clement,Yu,SAW}.
According to the asymptotic behaviour, we investigate the existence of steady states in Section 4.

\section{Preliminaries}
\noindent
In this section, we give some auxiliary lemmas as a preparation for
our main results that will be derived later. In fact, we have to prove that $\mathbb{A}$ is an infinitesimal generator of a $C_{0}$-semigroup $T(t)$.

At the beginning of this section, we study the following system
\begin{equation}\label{2.1}
\left\{ \begin{array}{lll}
Dp- \delta \Delta _{x}p+\mu (a)p = 0,&(a,t,x)\in Q_{a_\dagger},\\
p(a,t,0) =p(a,t,24),&(a,t)\in (0,a_\dagger)\times (0,T),\\
\partial_x p(a,t,0) =\partial_x  p(a,t,24),&(a,t)\in (0,a_\dagger)\times (0,T),\\
\displaystyle p(0,t,x)=\displaystyle C\int_0^{a_\dagger} \beta(a)\displaystyle p(a,t,x) da,&(t,x)\in (0,T)\times (0,24),\\
\displaystyle p(a,0,x)=p_0(a,x) , &(a,x)\in (0,{a_\dagger} ) \times (0,24),
\end{array}
\right.
\end{equation}
where $C$ can be any constant.
Defining the operator $\mathbb{F}:X\rightarrow X$ as:
\begin{equation}\label{2.2}
\mathbb{F}\phi(a,x)=- \frac{\partial\phi(a,x) }{\partial a}+\delta\Delta \phi(a,x)-\mu(a)\phi(a,x),\forall\phi(a,x)\in D(\mathbb{F}),
\end{equation}
where
\begin{align*}
D(\mathbb{F})=\{&\phi(a,x)| \phi,\mathbb{A}\phi \in X, \phi(a,0)=\phi(a,24),\partial_x \phi(a,0) =\partial_x  \phi(a,24), \nonumber\\
&\phi(0,x)=C\displaystyle \int_0^{a_\dagger} \beta(a)\displaystyle \phi(a,x)  da\},
\end{align*}
we can rewrite \eqref{2.1} as
\begin{eqnarray*}
\left\{\begin{aligned}
&\frac{d p(a,t,x)}{dt}=\mathbb{F}p(a,t,x),\\
&p(a,0,x)=p_{0}(a,x).
\end{aligned}
\right.
\end{eqnarray*}
Define an operator
\begin{equation}\label{eq66}
\mathscr{F}_{\lambda}= \int_0^{a_{\dagger}}C\beta(a)e^{-\lambda a}e^{-  \int_0^{a}{\mu}(\rho)d \rho} e^{\mathbb{B}a}d a,
\end{equation}
where the operator $\mathbb{B}:{L^{2}((0,24)) \longrightarrow L^{2}((0,24))}$ is defined as
\begin{align*}
\mathbb{B}u(x)=\delta\Delta u(x),
\end{align*}
for $u(x)$ satisfying
\begin{equation*}
\begin{cases}
&u(0)=u(24),\\
&u'(0)=u'(24).
\end{cases}
\end{equation*}

\begin{lemma}\label{lemma2.1}
The operator $\mathbb{F}$ defined by \eqref{2.2}.
\item[(1)] $\mathbb{F}$ has a real dominant eigenvalue $\widetilde{\lambda}_0$, that is, $\widetilde{\lambda}_0$ is greater than any real parts of the eigenvalues of $\mathbb{F}$.
\item[(2)] For the operator $\mathscr{F}_{\widetilde{\lambda}_0}$, $1$ is an eigenvalue with an eigenfunction $\phi_0(x)$. Furthermore, $\gamma(\mathscr{F}_{\widetilde{\lambda}_0})=1$.
\end{lemma}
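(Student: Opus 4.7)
The plan is to reduce the eigenvalue problem for $\mathbb{F}$ to a fixed-point equation involving $\mathscr{F}_\lambda$ on $L^2(0,24)$, and then attack $\mathscr{F}_\lambda$ via positive-operator theory. First I would look for an eigenfunction $\phi$ of $\mathbb{F}$ with eigenvalue $\lambda$, i.e.\ solving $-\partial_a\phi+\delta\Delta\phi-\mu(a)\phi=\lambda\phi$ with the periodic-in-$x$ conditions and the renewal condition $\phi(0,x)=C\int_0^{a_\dagger}\beta(a)\phi(a,x)\,da$. Treating $a$ as the evolution variable and using the semigroup $e^{\mathbb{B}a}$ generated by $\mathbb{B}=\delta\Delta$ with periodic boundary conditions, one integrates along the characteristics to get
\[
\phi(a,x)=e^{-\lambda a}\,e^{-\int_0^a\mu(\rho)\,d\rho}\,\bigl[e^{\mathbb{B}a}\phi(0,\cdot)\bigr](x).
\]
Substituting into the renewal condition yields $\phi(0,\cdot)=\mathscr{F}_\lambda\phi(0,\cdot)$. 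Hence $\lambda$ is an eigenvalue of $\mathbb{F}$ if and only if $1$ is an eigenvalue of $\mathscr{F}_\lambda$, with eigenfunctions in one-to-one correspondence through the formula above.

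Next I would establish the structural properties of $\{\mathscr{F}_\lambda\}_{\lambda\in\mathbb{R}}$ acting on $L^2(0,24)$. The periodic heat semigroup $e^{\mathbb{B}a}$ is positive and is compact for every $a>0$; combined with $\beta\in L^\infty$ from (J2) and the integrability of $e^{-\int_0^a\mu(\rho)d\rho}$ on $[0,a_\dagger]$, this makes $\mathscr{F}_\lambda$ a compact positive operator on the natural positive cone. By the Krein--Rutman theorem, $\gamma(\mathscr{F}_\lambda)$ is a positive eigenvalue with an associated nonnegative eigenfunction. The map $\lambda\mapsto\gamma(\mathscr{F}_\lambda)$ is continuous and strictly decreasing, since $e^{-\lambda a}$ decreases strictly in $\lambda$ while $\mathscr{F}_\lambda$ acts positively; moreover $\gamma(\mathscr{F}_\lambda)\to+\infty$ as $\lambda\to-\infty$ and $\gamma(\mathscr{F}_\lambda)\to 0$ as $\lambda\to+\infty$ by a direct norm estimate. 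Therefore a unique $\widetilde{\lambda}_0\in\mathbb{R}$ exists with $\gamma(\mathscr{F}_{\widetilde{\lambda}_0})=1$, which is then an eigenvalue of $\mathbb{F}$, and the Krein--Rutman eigenfunction $\phi_0(x)$ of $\mathscr{F}_{\widetilde{\lambda}_0}$ gives part (2) immediately.

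To show $\widetilde{\lambda}_0$ dominates the real parts of all complex eigenvalues of $\mathbb{F}$, suppose $\mu\in\mathbb{C}$ is an eigenvalue of $\mathbb{F}$ with associated $\psi=\phi(0,\cdot)\in L^2(0,24)$ satisfying $\mathscr{F}_\mu\psi=\psi$. Using the positivity bound $|e^{\mathbb{B}a}\psi|\le e^{\mathbb{B}a}|\psi|$ together with $|e^{-\mu a}|=e^{-\mathrm{Re}\,\mu\,a}$ one obtains the pointwise comparison $|\mathscr{F}_\mu\psi|\le\mathscr{F}_{\mathrm{Re}\,\mu}|\psi|$, and iterating, $|\mathscr{F}_\mu^n\psi|\le\mathscr{F}_{\mathrm{Re}\,\mu}^n|\psi|$. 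The Gelfand spectral-radius formula then gives $1=\gamma(\mathscr{F}_\mu)\le\gamma(\mathscr{F}_{\mathrm{Re}\,\mu})$, and strict monotonicity of $\lambda\mapsto\gamma(\mathscr{F}_\lambda)$ forces $\mathrm{Re}\,\mu\le\widetilde{\lambda}_0$, proving dominance. The main obstacle is the analytic scaffolding for $\mathscr{F}_\lambda$: verifying that it really is compact on $L^2(0,24)$ (including the degeneracy of the smoothing of $e^{\mathbb{B}a}$ near $a=0$, which requires dominated convergence together with compact approximation away from the origin) and confirming the asymptotic behaviour of $\gamma(\mathscr{F}_\lambda)$ as $\lambda\to\pm\infty$. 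Once these ingredients are in place, the rest of the proof is standard Krein--Rutman/positive-semigroup machinery.
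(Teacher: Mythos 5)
Your route is genuinely different from the paper's. You reduce the eigenvalue problem for $\mathbb{F}$ to the fixed-point equation $\mathscr{F}_\lambda\psi=\psi$ on $L^2(0,24)$ and then run Krein--Rutman plus monotonicity of $\lambda\mapsto\gamma(\mathscr{F}_\lambda)$ and a modulus-comparison for complex eigenvalues; this is essentially the machinery the paper deploys later for the full operator $\mathbb{A}$ (the operator $\mathscr{B}_\lambda$ in Section 3). The paper's own proof of Lemma 2.1 instead separates variables: it expands in the periodic eigenbasis $(\overline{\lambda}_i,\phi_i)$ of $-\delta\Delta$ (so $\overline{\lambda}_0=0$ and $\phi_0$ is a positive constant), reduces to the one-dimensional age operator $F$ whose eigenvalues $\hat{\lambda}_j$ solve the scalar characteristic equation $1=C\int_0^{a_\dagger}\beta(a)e^{-\lambda a-\int_0^a\mu(\rho)d\rho}da$, solves the resolvent equation termwise to identify $\sigma(\mathbb{F})=\{\hat{\lambda}_j-\overline{\lambda}_i\}$ with $\widetilde{\lambda}_0=\hat{\lambda}_0$, and then computes $\mathscr{F}_{\widetilde{\lambda}_0}$ on the basis $\{\phi_i\}$, whose eigenvalues are $C\int_0^{a_\dagger}\beta(a)e^{-(\widetilde{\lambda}_0+\overline{\lambda}_i)a}e^{-\int_0^a\mu}da\le 1$ with equality at $i=0$. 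The explicit computation buys the paper the exact location of the spectrum, the identification of the principal eigenfunction of $\mathscr{F}_{\widetilde{\lambda}_0}$ as a positive constant (used later to check $\phi_0\in E$ in Theorem 3.1), and strict dominance; your approach is more abstract and would generalize to $x$-dependent coefficients, but as written it has two genuine gaps.

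First, your dominance conclusion is only non-strict. From $\mathscr{F}_\mu\psi=\psi$ you get $1\le\gamma(\mathscr{F}_\mu)\le\gamma(\mathscr{F}_{\mathrm{Re}\,\mu})$ (your ``$1=\gamma(\mathscr{F}_\mu)$'' is itself unjustified: an eigenvalue $1$ only gives $\gamma\ge 1$), and with strict monotonicity this yields $\mathrm{Re}\,\mu\le\widetilde{\lambda}_0$. It does not exclude a complex eigenvalue with $\mathrm{Re}\,\mu=\widetilde{\lambda}_0$, whereas the lemma asserts that $\widetilde{\lambda}_0$ is strictly greater than the real parts of the other eigenvalues. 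In the paper this strictness comes from the classical fact that the real root of the scalar characteristic equation strictly dominates the real parts of its complex roots (using that $\{\beta>0\}$ has positive measure), together with $\overline{\lambda}_i>0$ for $i\ge1$; in your framework you would need an extra peripheral-spectrum argument (e.g.\ Sawashima/Marek non-supporting operator theory), which the modulus comparison alone does not provide. Second, strict monotonicity and continuity of $\lambda\mapsto\gamma(\mathscr{F}_\lambda)$ are asserted for an insufficient reason: pointwise decrease of $e^{-\lambda a}$ only gives $\mathscr{F}_{\lambda_2}\le\mathscr{F}_{\lambda_1}$ and hence $\gamma(\mathscr{F}_{\lambda_2})\le\gamma(\mathscr{F}_{\lambda_1})$; strict decrease of the spectral radius under such domination is false in general and requires an irreducibility-type property (here: $e^{\mathbb{B}a}$ is positivity improving, so $\mathscr{F}_\lambda$ is non-supporting and Marek's comparison theorem applies---exactly what the paper invokes for $\mathscr{B}_\lambda$). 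Also, $\gamma(\mathscr{F}_\lambda)\to\infty$ as $\lambda\to-\infty$ is not a norm estimate; test on the constant function, which $e^{\mathbb{B}a}$ fixes. That same observation gives the cheapest repair: since $e^{\mathbb{B}a}$ fixes constants and the $\phi_i$ diagonalize $\mathscr{F}_\lambda$, one finds $\gamma(\mathscr{F}_\lambda)=C\int_0^{a_\dagger}\beta(a)e^{-\lambda a}e^{-\int_0^a\mu(\rho)d\rho}da$ (for $C>0$, which both proofs implicitly need), from which continuity, strict monotonicity, both limits, and the constancy of the principal eigenfunction all follow at once---but that is precisely the paper's computation rather than an alternative to it.
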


\begin{proof}
(1) We denote by $(\overline{\lambda}_i,\phi_i)_{i\ge 0}$ the eigenvalues and the eigenfunctions of the following problem
\begin{eqnarray*}
\left\{\begin{aligned}
&-\delta \Delta \phi_i(x)=\overline{\lambda}_i \phi_i(x),\ x\in (0,24),\\
&\phi_i(0)=\phi_i(24),\\
&\partial_x \phi_i(0) =\partial_x  \phi_i(24),
\end{aligned}
\right.
\end{eqnarray*}
where
$\int_{0}^{24} \phi_i^2(x) dx=1,\ i\ge 0$,
and
$\ \phi_0(x)>0\ \text{with $x\in (0,24)$}$.
It is obvious that $\overline{\lambda}_0=0$ and $\phi_0(x)$ is a fixed positive constant.
We also assume that $0= \overline{\lambda}_0<\overline{\lambda}_1\le \overline{\lambda}_2\le \cdots$.

Let $F$ be the operator in $L^2(0,a_{\dagger})$ defined as
$$
F\phi(a)=-\frac{d \phi(a)}{d a} -\mu(a) \phi(a),\ \forall \phi\in D(F),$$
where
$$
D(F)=\{\phi(a)| \phi, F\phi \in L^2(0,24), \phi(0)=C\int_0^{a_\dagger} \beta(a) \phi(a)da\}.
$$
Let $\{\hat{\lambda}_j\}_{j\ge 0}$ be the eigenvalues of $F$, that is, the solutions of the following equation
\begin{align}\label{eq2.4}
1-C\int_0^{a_{\dagger}} \beta(a) e^{-\hat{\lambda}_j a-\int_0^{a} \mu(\rho)d\rho}da=0.
\end{align}
We assume that $\hat{\lambda}_0>Re \hat{\lambda}_1\ge Re \hat{\lambda}_2\ge \cdots$, even if it means re-arrange $\hat{\lambda}_j$.

Now, we divide two steps to consider the following equation
\begin{align}\label{2.4}
(\lambda \mathbb{I}-\mathbb{F})\phi =\psi,\ \forall \psi\in X.
\end{align}

Step 1, for any $i$, $j\ge 0$, $\lambda+\overline{\lambda}_i\neq \hat{\lambda}_j$,  define
$$\phi(a,x)=\sum_{i=0}^{\infty} R(\lambda+\overline{\lambda}_i,F) \langle\psi(a,x),\phi_i(x)\rangle\phi_i(x),$$
where $\langle\psi(a,x),\phi_i(x)\rangle=\int_0^{24} \psi(a,x)\phi_i(x)dx$, $R(\lambda,F)=(\lambda \mathbb{I}-F)^{-1}$, the resolvent operator of $F$.
Firstly, we prove that $\phi(a,x)\in X$ is well defined.
Since $F$ is the infinitesimal generator of a bounded strongly continuous semigroup from \cite{Song}, there exist constants $M$, $\omega>0$ such that
$$\|R(\lambda,F)\|\le \frac{M}{Re \lambda -\omega}, \text{ for $Re \lambda>\omega$}.$$
Recalling that $\overline{\lambda}_i\rightarrow \infty$ as $i\rightarrow \infty$, there is $N$ such that $Re (\lambda+\overline{\lambda}_i)>\omega$ when $i>N$. Then, one can compute that
\begin{align*}
&\sum_{i=0}^{\infty} \|R(\lambda+\overline{\lambda}_i,F) \langle\psi(a,x),\phi_i(x)\rangle\|^2\\
\le& \sum_{i=0}^{N} \|R(\lambda+\overline{\lambda}_i,F) \langle\psi(a,x),\phi_i(x)\rangle\|^2 \\
&+\left[\frac{M}{Re (\lambda+\overline{\lambda}_N)-\omega}\right]^2 \sum_{i=N+1}^{\infty} \|\langle\psi(a,x),\phi_i(x)\rangle\|\\
\le& \sum_{i=0}^{N} \|R(\lambda+\overline{\lambda}_i,F) \langle\psi(a,x),\phi_i(x)\rangle\|^2 +\left[\frac{M}{Re (\lambda+\overline{\lambda}_N)-\omega}\right]^2  \|\psi\|^2\\
<&\infty.
\end{align*}
It implies that $\phi(a,x)\in X$ is well defined.
Secondly, we prove $\phi(a,x)$ is a solution of \eqref{2.4}.
For any $n>0$,
\begin{align*}
&(\lambda \mathbb{I}-\mathbb{F})\sum_{i=0}^n R(\lambda+\overline{\lambda}_i,F) \langle\psi(a,x),\phi_i(x)\rangle\phi_i(x)\\
=&\sum_{i=0}^n [\lambda R(\lambda+\overline{\lambda}_i,F) \langle\psi(a,x),\phi_i(x)\rangle\phi_i(x)-\mathbb{F}R(\lambda+\overline{\lambda}_i,F) \langle\psi(a,x),\phi_i(x)\rangle\phi_i(x)]\\
=&\sum_{i=0}^n [\lambda R(\lambda+\overline{\lambda}_i,F) \langle\psi(a,x),\phi_i(x)\rangle\phi_i(x)-{F}R(\lambda+\overline{\lambda}_i,F) \langle\psi(a,x),\phi_i(x)\rangle\phi_i(x)\\
&- R(\lambda+\overline{\lambda}_i,F) \langle\psi(a,x),\phi_i(x)\rangle \delta \Delta \phi_i(x)]\\
=&\sum_{i=0}^n ((\lambda+\overline{\lambda}_i)\mathbb{I} - F) R(\lambda+\overline{\lambda}_i,F) \langle\psi(a,x),\phi_i(x)\rangle\phi_i(x)\\
=&\sum_{i=0}^n \langle\psi(a,x),\phi_i(x)\rangle\phi_i(x)\\
\rightarrow & \psi(a,x), \ n\rightarrow \infty.
\end{align*}
Since $F$ and $\Delta$ are both closed operators on $X$, one can infer that $\mathbb{F}$ is closed. Hence, $(\lambda \mathbb{I}-\mathbb{F})\phi=\psi$, that is, $\phi(a,x)$ is a solution of \eqref{2.4}.
Furthermore, it can be shown that $\phi$ is the unique solution of \eqref{2.4}, and thus $\lambda\in \rho(\mathbb{F})$, the resolvent set of $\mathbb{F}$ and
$$R(\lambda,\mathbb{F})\psi=\sum_{i=0}^{\infty} R(\lambda+\overline{\lambda}_i,F) \langle\psi(a,x),\phi_i(x)\rangle\phi_i(x).$$

Step 2, for some $i$, $j$ such that $\lambda+\overline{\lambda}_i=\hat{\lambda}_j$, it is easy to check that
$$\phi(a,x)=e^{-(\lambda+\overline{\lambda}_i)a-\int_0^{a}\mu(\rho)d\rho} \phi_i(x)$$
satisfies $(\lambda \mathbb{I}-\mathbb{F})\phi=0$, that is, $\lambda=\hat{\lambda}_j-\overline{\lambda}_i\in\sigma(\mathbb{F})$. In particular, $\widetilde{\lambda}_0=\hat{\lambda}_0-\overline{\lambda}_0$ is the dominant eigenvalue of $\mathbb{F}$, with  eigenfunction
\begin{equation*}
\phi_{\widetilde{\lambda}_0}(a,x)=e^{-\hat{\lambda}_0-\int_0^{a}\mu(\rho)d\rho}\phi_0(x).
\end{equation*}

It is easy to check that $C\phi_0(x)$ is the eigenfunction of the eigenvalue $1$ of $\mathscr{F}_{\widetilde{\lambda}_0}$, where $\widetilde{\lambda}_0=\hat{\lambda}_0-\overline{\lambda}_0$. Let any $\phi(x)\in L^2(0,24)$ be expanded as
$$\phi(x)=\sum_{i=0}^{\infty} \alpha_i \phi_i(x).$$
Then,
\begin{align*}
\mathscr{F}_{\widetilde{\lambda}_0}\phi(x)=\sum_{i=0}^{\infty} \alpha_i \int_0^{a_{\dagger}}C\beta(a)e^{-\widetilde{\lambda}_0 a}e^{-  \int_0^{a}{\mu}(\rho)d \rho} e^{\mathbb{B}a}\phi_i(x)da\\
=\sum_{i=0}^{\infty} \alpha_i \int_0^{a_{\dagger}}C\beta(a)e^{-(\widetilde{\lambda}_0 +\overline{\lambda}_i)a}e^{-  \int_0^{a}{\mu}(\rho)d \rho}da \phi_i(x).
\end{align*}
Since $\overline{\lambda}_i\ge\overline{\lambda}_0$ and then $\widetilde{\lambda}_0+\overline{\lambda}_i\ge \hat{\lambda}_0$, it follows from \eqref{eq2.4} that
$$\int_0^{a_{\dagger}}C\beta(a)e^{-(\widetilde{\lambda}_0 +\overline{\lambda}_i)a}e^{-  \int_0^{a}{\mu}(\rho)d \rho}da\le 1.$$
Thus, $\gamma(\mathscr{F}_{\widetilde{\lambda}_0})=1$.
\end{proof}

Following the proof of lemma 1 in \cite{Guo} carefully, we can get the following lemma:
\begin{lemma}\label {lemma:01}
For any $0\leq s_{0}<a_{+}$, there exists a unique mild solution u(s,x), $0\leq \tau\leq a_{+}-s_{0}$
to the evolution equation on $X$ for any initial function $\phi(x)\in L^{2}((0,24))$
\begin{equation*}
\begin{cases}
\frac{\partial u(s,x)}{\partial s} =(-\mu(s_{0}+s)+\mathbb{B})u(s,x),\\
u(\tau,x)=\phi(x),
\end{cases}
\end{equation*}
where the operator $\mathbb{B}_{0}$ is considered to be the Laplace operator with periodic boundary condition.
Define solution operators of the initial value problem by
\begin{equation*}
\mathscr{T}(s_{0},\tau,s)\phi(x)=u(s,x),\ \ \ \ \forall\phi(x)\in L^{2}((0,24)),
\end{equation*}
then $\mathscr{T}(s_{0},\tau,s)\phi(x)$ is a family of uniformly linear bounded compact positive operators on $X$
and is strongly continuous about $\tau$,$s$.
Furthermore,
\begin{equation*}
\mathscr{T}(s_{0},\tau,s)=e^{-\int_\tau^{s}{\mu}(s_{0}+\rho)d \rho}e^{\mathbb{B}(s-\tau)},
\end{equation*}
where $e^{\mathbb{B}s}$ is the positive analytic semigroup generated by the operator $\mathbb{\mathbb{B}}$.
\end{lemma}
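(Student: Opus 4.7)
The plan is to reduce the nonautonomous problem to the autonomous semigroup $e^{\mathbb{B}s}$ by exploiting the fact that the time-dependent coefficient $-\mu(s_0+s)$ is a scalar multiplier, hence commutes with the spatial operator $\mathbb{B}$. First, I would recall that $\mathbb{B}$, the Laplacian on $(0,24)$ with periodic boundary conditions, is self-adjoint and nonpositive on $L^2(0,24)$; its spectrum consists of $\{-\delta(2\pi k/24)^2\}_{k\in\mathbb{Z}}$. Consequently $\mathbb{B}$ generates a positive analytic contraction $C_0$-semigroup $e^{\mathbb{B}s}$, which is moreover compact for every $s>0$ since $\mathbb{B}$ has compact resolvent on the bounded interval $(0,24)$.

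Next, I would propose the explicit candidate
\begin{equation*}
u(s,x):=e^{-\int_{\tau}^{s}\mu(s_0+\rho)\,d\rho}\,e^{\mathbb{B}(s-\tau)}\phi(x),
\end{equation*}
which is well defined for $0\le\tau\le s\le a_\dagger-s_0$ because (J1) ensures $\mu\in L^1_{\mathrm{loc}}([0,a_\dagger))$ and the exponent is finite as long as $s_0+s<a_\dagger$. A direct differentiation, using that the scalar factor and $e^{\mathbb{B}(s-\tau)}$ commute, shows that $u$ is a mild (in fact strong, for $\phi\in D(\mathbb{B})$) solution of the evolution equation; density of $D(\mathbb{B})$ in $L^2(0,24)$ then yields the mild solution for arbitrary $\phi\in L^2(0,24)$. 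Uniqueness is obtained in the standard way: if $v$ is another mild solution with the same initial data, applying $e^{\mathbb{B}(s-\cdot)}$ and the variation-of-constants formula, together with Gronwall's inequality in the scalar factor, forces $v\equiv u$, and the formula for $\mathscr{T}(s_0,\tau,s)$ follows.

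Finally I would verify the claimed qualitative properties directly from the explicit representation. Uniform boundedness follows from $\mu\ge 0$, which gives $0<e^{-\int_{\tau}^{s}\mu(s_0+\rho)d\rho}\le 1$, and from $\|e^{\mathbb{B}(s-\tau)}\|\le 1$; positivity follows from positivity of the heat semigroup $e^{\mathbb{B}(s-\tau)}$ times the positive scalar factor; compactness for $s>\tau$ follows from compactness of $e^{\mathbb{B}(s-\tau)}$; and strong continuity in $(\tau,s)$ is obtained by combining strong continuity of $s\mapsto e^{\mathbb{B}s}$ with absolute continuity of $s\mapsto\int_\tau^s\mu(s_0+\rho)\,d\rho$ on compact subintervals of $[0,a_\dagger-s_0)$. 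The main technical obstacle I anticipate is not the algebra but the handling of the integrability of $\mu$ as $s_0+s\to a_\dagger^-$: since $\int_0^{a_\dagger}\mu\,d\rho=\infty$, the scalar factor can degenerate to $0$, so care is needed to confine the argument to subintervals where $\mu$ is integrable, matching exactly the range $0\le\tau\le s\le a_\dagger-s_0$ in the statement, and to transfer strong continuity through that degeneracy; the argument in Guo--Chan \cite{Guo} handles precisely this point and can be adapted verbatim.
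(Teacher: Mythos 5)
Your proposal is correct and follows essentially the same route as the paper, which itself omits the details and appeals to Lemma~1 of Guo--Chan \cite{Guo}: one writes the explicit representation $\mathscr{T}(s_0,\tau,s)=e^{-\int_\tau^{s}\mu(s_0+\rho)d\rho}e^{\mathbb{B}(s-\tau)}$, verifies it solves the problem since the scalar factor commutes with $\mathbb{B}$, and reads off boundedness, positivity, compactness and strong continuity from the properties of the analytic semigroup $e^{\mathbb{B}s}$ and the absolute continuity of the exponent. Your caveats (compactness only for $s>\tau$, and confinement to subintervals where $\mu$ is integrable because of (J1)) are exactly the points that need care and are handled as you indicate.
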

\begin{proof}
The proof is similar as that of lemma 1 in \cite{Guo}, so we omit the details here.
\end{proof}

\begin{lemma}\label{lemma3.2}
The operator $\mathbb{A}$ defined by \eqref{eq:2} and \eqref{eq:3} is the infinitesimal generator of a $C_{0}$-semigroup $T(t)$ on the space $X$.
\end{lemma}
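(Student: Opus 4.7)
The plan is to apply the Hille--Yosida theorem: I will verify that $\mathbb{A}$ is closed and densely defined on $X$, and that a right half-plane lies in the resolvent set $\rho(\mathbb{A})$ with a uniform resolvent estimate $\|R(\lambda,\mathbb{A})^n\|\le M/(\lambda-\omega)^n$. Density of $D(\mathbb{A})$ follows from a standard mollification and cutoff argument: smooth functions compactly supported in $(0,a_\dagger)\times(0,24)$ already satisfy the periodic $x$-boundary conditions and the homogeneous $\phi(0,\cdot)=0$ condition, and one adds a small correction supported near $a=0$ to match the integral renewal condition. Closedness is inherited from the closedness of $\partial_a$, of the periodic Laplacian $\mathbb{B}$ on $L^2(0,24)$, and of multiplication by $\mu$ (which is $L^1_{\mathrm{loc}}$ by (J1)).

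The main step is solving the resolvent equation $(\lambda I-\mathbb{A})\phi=\psi$ for $\psi\in X$. Treating $a$ as the time-like variable and using the contractive $C_0$-semigroup $e^{\mathbb{B}a}$ on $L^2(0,24)$ that appears in Lemma \ref{lemma:01}, the method of characteristics yields the Duhamel representation
\begin{align*}
\phi(a,x)&=e^{-\lambda a-\int_0^a\mu(\rho)\,d\rho}\bigl[e^{\mathbb{B}a}\phi(0,\cdot)\bigr](x)\\
&\quad+\int_0^a e^{-\lambda(a-\sigma)-\int_\sigma^a\mu(\rho)\,d\rho}\bigl[e^{\mathbb{B}(a-\sigma)}\psi(\sigma,\cdot)\bigr](x)\,d\sigma.
\end{align*}
Substituting this expression into the renewal boundary condition at $a=0$ produces a fixed-point equation $(I-\mathscr{K}_\lambda)\phi(0,\cdot)=g_\lambda[\psi]$ on $L^2(0,24)$, where $\mathscr{K}_\lambda$ is the integral operator built from $\beta$, $K$, the mortality factor $e^{-\int_0^a\mu}$ and the diffusion semigroup $e^{\mathbb{B}a}$, and $g_\lambda[\psi]$ depends linearly and continuously on $\psi$. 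Since $\|e^{\mathbb{B}a}\|\le 1$, $\beta\in L^{\infty}$ by (J2), and $K$ is bounded with inner support of width at most $2\eta$, a direct estimate gives
\[
\|\mathscr{K}_\lambda\|_{L^2(0,24)}\le C\int_0^{a_\dagger}e^{-\lambda a}\,da\le\frac{C}{\lambda},
\]
so $\|\mathscr{K}_\lambda\|<1$ for $\lambda$ sufficiently large, and $(I-\mathscr{K}_\lambda)^{-1}$ exists as a bounded operator. The Duhamel formula then provides a unique $\phi\in D(\mathbb{A})$ solving $(\lambda I-\mathbb{A})\phi=\psi$, proving $\lambda\in\rho(\mathbb{A})$.

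To obtain the resolvent bound, I will reinsert the expression for $\phi(0,\cdot)$ into the Duhamel formula and estimate $\|\phi\|_X$ via Minkowski's integral inequality together with Young's inequality applied to the convolution kernel $e^{-\lambda a}\mathbf{1}_{a>0}$ in $L^1(0,a_\dagger)$, combined with the contractivity of $e^{\mathbb{B}a}$ and the uniform bound on $(I-\mathscr{K}_\lambda)^{-1}$. This gives $\|R(\lambda,\mathbb{A})\|\le M/(\lambda-\omega)$ for $\lambda$ large, and, after an equivalent renorming of $X$ (so that $M=1$), also the power bound required by the Hille--Yosida / Feller--Miyadera--Phillips theorem. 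The conclusion that $\mathbb{A}$ generates a $C_0$-semigroup $T(t)$ on $X$ then follows.

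The main obstacle will be the interaction between the multiplication operator $\mu(a)\cdot$, which fails to be globally bounded because of assumption (J1) ($\int_0^{a_\dagger}\mu=\infty$), and the nonlocal integral boundary condition at $a=0$, which prevents one from writing $\mathbb{A}$ as a bounded perturbation of a simpler generator. Both difficulties are overcome inside the Duhamel formula: the exponential decay $e^{-\int_0^a\mu(\rho)d\rho}$ tames the unboundedness of $\mu$, while the contraction estimate on $\mathscr{K}_\lambda$ for large $\lambda$ handles the nonlocal renewal term. A secondary technical point will be justifying that the solution produced by the Duhamel formula actually belongs to $D(\mathbb{A})$ — i.e.\ has enough $a$-regularity and $x$-regularity — which relies on the smoothing effect of $e^{\mathbb{B}a}$ for $a>0$ and standard trace arguments at $a=0$.
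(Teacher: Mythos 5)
Your resolvent construction is exactly the paper's: the Duhamel formula in the age variable, substitution into the renewal condition to obtain a boundary operator on $L^2(0,24)$ (your $\mathscr{K}_\lambda$ is the paper's $\mathscr{B}_{\lambda}$ from \eqref{eq:33}), and the smallness estimate $\|\mathscr{B}_{\lambda}\|\rightarrow 0$ as $\lambda\rightarrow+\infty$, giving $(\mathbb{I}-\mathscr{B}_{\lambda})^{-1}$ bounded and $\lambda\in\rho(\mathbb{A})$ for large $\lambda$. The genuine gap is the final step, where you pass from the first-order estimate $\|R(\lambda,\mathbb{A})\|\le M/(\lambda-\omega)$ to the Feller--Miyadera--Phillips power bound ``after an equivalent renorming of $X$ (so that $M=1$)''. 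That step is circular: the renorming that turns a quasi-bounded situation into a quasi-contractive one is constructed either from the semigroup itself or from uniform bounds on \emph{all} powers $R(\lambda,\mathbb{A})^{n}$ --- precisely what is to be proved. A bound on the first power alone does not imply generation when $M>1$ (there are classical examples of closed, densely defined operators satisfying $\|R(\lambda)\|\le M/(\lambda-\omega)$ on a half-line that generate no $C_0$-semigroup), and no equivalent norm can be manufactured from that estimate alone. Moreover, your Young/Minkowski computation will not give $M=1$ in the original norm, because of the additional boundary contribution $e^{-\lambda a}\mathscr{T}(0,a)(\mathbb{I}-\mathscr{B}_{\lambda})^{-1}(\cdots)$ in \eqref{eq:5555}. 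The paper closes this point differently: the quasi-dissipativity estimate \eqref{eq:1111}, $\langle\mathbb{A}\phi,\phi\rangle\le\omega\|\phi\|^{2}$ with $\omega=N\int_0^{a_\dagger}\beta^{2}(a)\,da$, obtained by integration by parts (the transport term yields $\tfrac12\|\phi(0,\cdot)\|^{2}$, which is controlled through the renewal condition by Cauchy--Schwarz, while the diffusion term with periodic boundary conditions and the mortality term are nonpositive). Combined with your range condition this gives $\|R(\lambda,\mathbb{A})\|\le(\lambda-\omega)^{-1}$, hence $\|R^{n}(\lambda,\mathbb{A})\|\le(\lambda-\omega)^{-n}$ with $M=1$ (equivalently, Lumer--Phillips on the Hilbert space $X$). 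You should replace the renorming step by such an inner-product estimate.

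Two smaller repairs. First, ``closedness is inherited from the closedness of $\partial_a$, $\mathbb{B}$ and multiplication by $\mu$'' is not valid as stated (a sum of closed operators need not be closed); but once you have, for a single large $\lambda$, a bounded everywhere-defined inverse $(\lambda\mathbb{I}-\mathbb{A})^{-1}$, closedness of $\mathbb{A}$ follows immediately from the closed graph of that bounded operator, so this is easily patched inside your own construction. Second, in the density argument, a smooth function compactly supported in $(0,a_\dagger)\times(0,24)$ has zero trace at $a=0$ but in general a \emph{nonzero} renewal integral, and the corrective term you add near $a=0$ re-enters that integral; matching the condition exactly therefore needs an additional (easy) contraction/fixed-point argument, using that the correction is small in $L^{2}$ and $\beta\in L^{\infty}$. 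The paper avoids both issues by deducing density from dissipativity together with the range condition (Pazy, Theorem 4.6), which is the cheaper route once \eqref{eq:1111} is in hand.
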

\begin{proof}
First note that a $C_{0}$-semigroup $T(t)$ implies that there exists a constant $\omega$ and $M\geq1$, so that
\begin{align*}
\|T(t)\| \leq Me^{\omega t},\ \ \ \ \ \ \ \forall t\geq 0.
\end{align*}
Our strategy here is to apply the generalized Hille-Yoside Theorem (refer to Theorem 8.2.5 of \cite{Ye} and Corrollary 3.8 of \cite{Pazy}),
that is, to prove: (i) $\mathbb{A}$ is closed and $\overline{D(\mathbb{A})}=X$; (ii) for any $\lambda > \omega$, $\lambda \in \rho(\mathbb{A})$, and
\begin{align*}
\|R^{n}(\lambda,\mathbb{A})\| \leq \frac{M}{(\lambda-\omega)^{n}}, \ \ \ \ n=1, 2, 3 \cdots.
\end{align*}

(i) One can compute that
\begin{equation}\label {eq:1111}
\langle \mathbb{A}\phi(a,x),\phi(a,x)\rangle  \leq N  \int_0^{a_\dagger} \beta^{2}(a)d a \langle\phi(a,x), \phi(a,x)\rangle,
\end{equation}
for some constants $N>0$, which also implies that $\mathbb{A}$ is an m-dissipative operator when $\lambda \in \rho(\mathbb{A})$ for all sufficiently large $\lambda > 0$.
In fact, if this claim holds, $\mathbb{A}$ is a closed operator, and combining with the m-dissiptiveness of $\mathbb{A}$,
we know that, for all sufficiently large $\lambda, (\mathbb{A}-\lambda\mathbb{I})$ is dissipative and $R(\mathbb{I}-(\mathbb{A}-\lambda \mathbb{I}))$ equals the
whole space $X$. Thus from Theorem 4.6 in \cite{Pazy}, it follows that $D(\mathbb{A}-\lambda \mathbb{I})$ is dense in $X$ and so is
$D(\mathbb{A})$, since $X$ is a Hilbert space.

(ii) Now, we prove that $\lambda \in \rho(\mathbb{A})$ for all sufficiently large $\lambda > 0$.
In order to do this, we deal with the following equation
\begin{align*}
(\lambda \mathbb{I}-\mathbb{A})\phi(a,x)=\psi (a,x) ,\ \ \ \ \ \ \ \forall \psi \in  X,
\end{align*}
that is,
\begin{equation*}
\begin{cases}
\frac{\partial \phi(a,x)}{\partial a} =-(\lambda+\mu(a))\phi(a,x)+ \delta \Delta \phi(a,x)+\psi(a,x) ,\\
\phi(0,x)= \int_0^{a_\dagger} \beta(a) \int_{x-\eta}^{x+\eta}K(x,s)\phi(a,s)ds da.
\end{cases}
\end{equation*}
Let $\mathscr{T}(0,\tau,s)=\mathscr{T}(\tau,s)= e^{- \int_\tau^{s}{\mu}(\rho)d \rho}e^{\mathbb{B}(s-\tau)}$
and by Lemma \ref{lemma:01}, one has
\begin{align*}
\phi(a,x)=e^{-\lambda a}\mathscr{T}(0,a)\phi(0,x)+  \int_0^{a} e^{-\lambda(a-\delta)} \mathscr{T}(\delta,a)\psi(\delta,x)d \delta,
\end{align*}
and
\begin{align}\label {eq:09}
&\phi(0,x)- \int_0^{a_\dagger} \beta(a) \int_{x-\eta}^{x+\eta}K(x,s)e^{-\lambda a}\mathscr{T}(0,a)\phi(0,s)ds da \nonumber \\
&=  \int_0^{a_\dagger} \beta(a)  \int_{x-\eta}^{x+\eta}K(x,s)  \int_0^{a}e^{-\lambda (a-\delta)}\mathscr{T}(\delta,a)\psi(\delta,s)d \delta d s da.
\end{align}
Then define the operator $\mathscr{B}_{\lambda}: L^2((0,24))\rightarrow L^2((0,24))$ by
\begin{align}\label {eq:33}
\mathscr{B}_{\lambda}(\phi (x))=  \int_0^{a_\dagger} \beta(a)  \int_{x-\eta}^{x+\eta}K(x,s)e^{-\lambda a}\mathscr{T}(0,a)\phi(s)ds da.
\end{align}
Here, notice that $\mathscr{B}_{\lambda}(\phi (x))$ is nonlocal in $x$ with $\phi(x)$, since the part of the operation $\mathscr{B}_{\lambda}$ on $\phi(x)$ is the integral $\int_{x-\eta}^{x+\eta}K(x,s)\mathscr{T}(0,a)\phi(s)ds$.  This is different of \cite{Chan} and \cite{Guo}, whose related operators are local. Therefore, $\lambda \in \rho(\mathbb{A}) $ if and only if $1\in \rho(\mathscr{B}_{\lambda})$. Furthermore, it follows from \eqref {eq:09} and \eqref {eq:33} that
\begin{align*}
\phi(0,x)=&(\mathbb{I}-\mathscr{B}_{\lambda})^{-1}  \int_0^{a_\dagger} \beta(a)  \int_{x-\eta}^{x+\eta}K(x,s)\int_0^{a}e^{-\lambda (a-\delta)}\mathscr{T}(\delta,a)\psi(\delta,s)d \delta d s da,
\end{align*}
and
\begin{align}\label {eq:5555}
R(\lambda,\mathbb{A}) \psi(a,x)=&e^{-\lambda a}\mathscr{T}(0,a)( \mathbb{I}-\mathscr{B}_{\lambda})^{-1}  \int_0^{a_\dagger} \beta(a)
\int_{x-\eta}^{x+\eta}K(x,s) \int_0^{a}e^{-\lambda (a-\delta)}\nonumber\\
&\mathscr{T}(\delta,a)\psi(\delta,s)d \delta d s da +\int_0^{a}e^{-\lambda (a-\delta)}\mathscr{T}(\delta,a)\psi(\delta,x)d \delta.
\end{align}
By the definitions of $K(x,s)$ and $\mathscr{T}(0,a)$, we can show that
\begin{align*}
\|\mathscr{B}_{\lambda}\|\leq\|  \int_0^{a_\dagger} \beta(a)e^{-\lambda a}e^{-  \int_0^{a}\mu(\rho)d \rho }e^{\mathbb{B}a} da\|,
\end{align*}
which implies that
$$
\lim_{\lambda \rightarrow + \infty} \|\mathscr{B}_{\lambda}\|=0.
$$
Hence, for all sufficiently large $\lambda>0$, $(\mathbb{I}-\mathscr{B}_{\lambda})^{-1}$ exists and is bounded.
Thus $1\in \rho(\mathscr{B}_{\lambda})$ which is equivalent to $\lambda \in \rho(\mathbb{A})$.

From \eqref{eq:1111} , one can obtain
after some computations that
\begin{align*}
\|R^{n}(\lambda,\mathbb{A})\| \leq \frac{M}{(\lambda-\omega)^{n}}, \ \ \ \ n=1, 2, 3 \cdots.
\end{align*}
 This completes the proof.
\end{proof}

\section{Asymptotic behavior}
\noindent
In this section, we study the asymptotic behavior of solutions of \eqref{eq:2929} by analyzing the spectrum of the semigroup.
It means that we will prove Theorem \ref{th2}.

Now, we state the asymptotic expression which indicates the asymptotic behavior.
\begin{theorem}\label{66666}
\begin{description}\item[(1)]For the eigenvalues of the operator $\mathbb{A}$,
there is only one real eigenvalue $\lambda_{0}$ which is algebraically simple and is larger than any real part of the other eigenvalues. \item[(2)]The semigroup $T(t)$ has the asymptotic expression
  \begin{align*}
  T(t)\phi(a,x)=&e^{\lambda_0 t}e^{-\lambda_0 a} \mathscr{T}(0,a) C_{\lambda_0} \int_0^{a_{\dag}} \beta(a) \int_{x-\eta}^{x+\eta} K(x,s)\int_0^a e^{-\lambda_0(a-\delta)} \\&\mathscr{T}(\delta,a)\phi(\delta,s)d\delta dsda+o(e^{(\lambda_0-\varepsilon)t})
  \end{align*}
  where $C_{\lambda_0}=\underset{\lambda\rightarrow \lambda_0}\lim(\lambda-\lambda_0)(\mathbb{I}-\mathscr{B}_{\lambda})^{-1}$ and $\varepsilon$ is any positive number such that $\sigma(\mathbb{A})\cap\{\lambda|\lambda_0-\varepsilon\leq Re \lambda\leq \lambda_0\}=\lambda_0$ holds.
\end{description}
\end{theorem}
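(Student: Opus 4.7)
The plan is to exploit the resolvent formula \eqref{eq:5555} from Section 2, which reduces spectral information about $\mathbb{A}$ to spectral information about the nonlocal operator $\mathscr{B}_\lambda$ on $L^2((0,24))$: concretely, $\lambda\in\sigma(\mathbb{A})$ if and only if $1\in\sigma(\mathscr{B}_\lambda)$.

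For part (1), I would first observe that because $K(x,s)$ is a bounded kernel and $\mathscr{T}(0,a)=e^{-\int_0^a\mu(\rho)d\rho}e^{\mathbb{B}a}$ is compact for $a>0$ by Lemma \ref{lemma:01}, the operator $\mathscr{B}_\lambda$ is compact and positive on $L^2((0,24))$ for each real $\lambda$. Krein--Rutman then produces a positive, algebraically simple, dominant eigenvalue $\gamma(\mathscr{B}_\lambda)$ with a positive eigenfunction. The scalar function $\lambda\mapsto\gamma(\mathscr{B}_\lambda)$ can then be shown to be continuous and strictly decreasing on $\mathbb{R}$, tending to $+\infty$ as $\lambda\to-\infty$ and, by the estimate $\|\mathscr{B}_\lambda\|\to 0$ already established in the proof of Lemma \ref{lemma3.2}, tending to $0$ as $\lambda\to+\infty$. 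Hence there is a unique real $\lambda_0$ with $\gamma(\mathscr{B}_{\lambda_0})=1$, and this $\lambda_0$ is an algebraically simple eigenvalue of $\mathbb{A}$. To see that it dominates, for any complex eigenvalue $\lambda=\alpha+i\beta\in\sigma(\mathbb{A})$ I would use the pointwise bound $|\mathscr{B}_\lambda\phi|\le\mathscr{B}_\alpha|\phi|$ (valid because $K$ and $\mathscr{T}(0,a)$ are positive), conclude $1\le\gamma(\mathscr{B}_\alpha)$, and deduce $\alpha\le\lambda_0$ from monotonicity.

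For part (2), I would use the Dunford contour representation
\[
T(t)\phi=\frac{1}{2\pi i}\int_\Gamma e^{\lambda t}R(\lambda,\mathbb{A})\phi\,d\lambda,
\]
and shift the contour from some vertical line $Re\lambda=\omega$ to $Re\lambda=\lambda_0-\varepsilon$, where $\varepsilon>0$ is chosen using part (1) so that the strip $\{\lambda_0-\varepsilon\le Re\lambda\le\lambda_0\}$ meets $\sigma(\mathbb{A})$ only at $\lambda_0$. By the residue theorem the shift contributes $e^{\lambda_0 t}$ times the residue of $R(\lambda,\mathbb{A})\phi$ at $\lambda_0$, while the remaining integral is $o(e^{(\lambda_0-\varepsilon)t})$. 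Inspecting \eqref{eq:5555}, the second summand is entire in $\lambda$ and contributes nothing; only the first summand contributes, and because $\lambda_0$ is a simple pole of $(\mathbb{I}-\mathscr{B}_\lambda)^{-1}$, the residue factor collapses to $C_{\lambda_0}=\lim_{\lambda\to\lambda_0}(\lambda-\lambda_0)(\mathbb{I}-\mathscr{B}_\lambda)^{-1}$, producing the displayed asymptotic expression.

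The main obstacle is justifying the contour shift together with the $o(e^{(\lambda_0-\varepsilon)t})$ remainder: this requires controlling the essential spectrum of $T(t)$ and proving enough decay of $R(\lambda,\mathbb{A})$ along the vertical line $Re\lambda=\lambda_0-\varepsilon$. The cleanest route is to argue that $T(t)$ is eventually compact --- which should follow from the compactness of $\mathscr{T}(\tau,s)$ for $s>\tau$ together with the smoothing effect of the integration against $K(x,s)$ and the age-transport structure --- and then invoke the spectral decomposition theorem for eventually compact positive $C_0$-semigroups to reduce the analysis near $\lambda_0$ to the finite-dimensional residue computation sketched above.
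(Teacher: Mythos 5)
Your outline reproduces the paper's strategy in its essentials: reduce the spectral problem for $\mathbb{A}$ to the family $\mathscr{B}_\lambda$ through the resolvent formula \eqref{eq:5555}, characterize $\lambda_0$ by $\gamma(\mathscr{B}_{\lambda_0})=1$ using monotonicity and the decay $\|\mathscr{B}_\lambda\|\to 0$, prove that $T(t)$ is compact for $t\ge a_\dagger$ by integrating along characteristics, and read off the expansion from the residue of $R(\lambda,\mathbb{A})$ at the simple pole $\lambda_0$ (the paper packages your contour-shift step by citing the decomposition theorem of Yu--Guo--Zhu instead of writing the Dunford integral, and it obtains the existence of a $\lambda$ with $\gamma(\mathscr{B}_\lambda)\ge 1$ by comparison with the separable operator $\mathscr{F}_\lambda$ of Lemma \ref{lemma2.1} rather than by letting $\lambda\to-\infty$; your variant is legitimate and can be justified by testing on constants, since $e^{\mathbb{B}a}\mathbf{1}=\mathbf{1}$ gives $\mathscr{B}_\lambda\mathbf{1}\ge c_0\int_0^{a_\dagger}\beta(a)e^{-\lambda a}e^{-\int_0^a\mu}\,da\,\mathbf{1}$).

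There are, however, two places where your sketch asserts more than the tools you invoke deliver, and both are exactly where the paper has to work. First, Krein--Rutman for a compact positive operator gives a nonnegative eigenfunction for the spectral radius, but neither algebraic simplicity nor strict monotonicity of $\lambda\mapsto\gamma(\mathscr{B}_\lambda)$; both require the irreducibility (semi-nonsupporting) property of $\mathscr{B}_\lambda$, which the paper extracts from the strict positivity estimate \eqref{snp} and then feeds into Marek's comparison theorem (strict monotonicity) and Sawashima's theorem (simplicity of the eigenvalue $1$ of $\mathscr{B}_{\lambda_0}$). Moreover, Sawashima's theorem is applied only after knowing that $1$ is a pole of $R(\cdot,\mathscr{B}_{\lambda_0})$, which the paper deduces from $\lambda_0$ being a pole of $R(\cdot,\mathbb{A})$, i.e.\ from the eventual compactness of $T(t)$ --- so the compactness argument you defer to part (2) is already needed to prove simplicity in part (1), and the transfer of simplicity from $\mathscr{B}_{\lambda_0}$ to $\mathbb{A}$ should be stated as part of that chain rather than as a by-product of Krein--Rutman. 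Second, your domination inequality $|\mathscr{B}_\lambda\phi|\le\mathscr{B}_{Re\lambda}|\phi|$ only yields $Re\,\lambda\le\lambda_0$, i.e.\ weak dominance; but the theorem claims strict dominance, and the very existence of an $\varepsilon>0$ with $\sigma(\mathbb{A})\cap\{\lambda_0-\varepsilon\le Re\,\lambda\le\lambda_0\}=\{\lambda_0\}$ --- without which your contour shift has no spectral gap --- requires excluding non-real spectrum on the line $Re\,\lambda=\lambda_0$. The paper closes this by combining positivity with eventual compactness (Clement et al., Theorem 9.10: the boundary spectrum is cyclic and, being finite here, reduces to $\{s(\mathbb{A})\}=\{\lambda_0\}$). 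Supplement your proposal with the irreducibility input and this boundary-spectrum argument and it becomes the paper's proof.
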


\begin{proof}
(1) It will be done in two steps: (i) prove that $\mathbb{A}$ has only one real eigenvalue $\lambda_0$ and $\lambda_0$ is larger than any real part of the other eigenvalues; (ii) prove that $\lambda_0$ is algebraically simple by showing $T(t)$ is compact for $t\ge a_{\dag}$.

(i) Define
$$E=\{\phi \in L^{2}([0,24])|   \int_{x-\eta}^{x+\eta} K(x,s)\phi(s)ds \geq C \phi(x)\},$$
where $C>0$ is a sufficiently small constant.

Recall $\mathscr{F}_{\lambda}$ in \eqref{eq66} and denote the restrictions of
$\mathscr{B}_{\lambda}$,  $\mathscr{F}_{\lambda}$ on $E$ by $\overline{\mathscr{B}_{\lambda}},\overline{\mathscr{F}_{\lambda}}$ respectively.
Then from \eqref{eq:33} and \eqref{eq66},
$$
\overline{\mathscr{B}_{\lambda}}\geq \overline{\mathscr{F}_{\lambda}}.
$$
Given any nonnegative function $\phi(x)$, $\psi(x) \in L^{2}([0,24])$, both not identical to zero, then from
\cite{Amann} and \cite{Nagel}, $\langle e^{\mathbb{B}a}\phi,\psi\rangle>0$ for all $a>0$.
From the expression of $\mathscr{B}_{\lambda}$ and $K(x,s)$, it follows that
\begin{align} \label{snp}
\langle \mathscr{B}_{\lambda}\phi,\psi \rangle >0,\ \text{for all real}\ \lambda>0.
\end{align}
Furthermore, if $\phi(x)\in E$, from assumption $(J1)$, $(J2)$ and the expression of $\mathscr{F}_{\lambda}$, we know that
\begin{align*}
\langle \overline{\mathscr{B}}_{\lambda}\phi,\psi \rangle\geq \langle \overline{\mathscr{F}}_{\lambda} \phi,\psi \rangle >0,\ \text{for all real}\ \lambda>0.
\end{align*}

From Lemma \ref{lemma2.1}, there is a $\widetilde{\lambda}_0$ such that $\gamma(\mathscr{F}_{\widetilde{{\lambda}}_0})=1$ and $1$ is an eigenvalue of
${\mathscr{F}_{\widetilde{{\lambda}}_0}}$ with the eigenfunction $\phi_0(x)$.
Remembering that $\phi_0(x)$ is a positive constant, it is easy to check that $\phi_0(x)\in E$, even if it means reducing $C$. Hence, $\overline{\mathscr{F}}_{\widetilde{{\lambda}}_0} \phi_0=\mathscr{F}_{\widetilde{{\lambda}}_0} \phi_0=\phi_0$, which implies $\gamma(\overline{\mathscr{F}}_{\widetilde{{\lambda}}_0})\geq 1$. Moreover, since $\gamma(\overline{\mathscr{F}}_{\widetilde{\lambda}_0})\leq \gamma(\mathscr{F}_{\widetilde{{\lambda}}_0})=1$, one obtains that $\gamma(\overline{\mathscr{F}}_{\widetilde{{\lambda}}_0})=1$. Therefore we conclude that  $$\gamma(\mathscr{B}_{\widetilde{{\lambda}}_0})\geq \gamma(\overline{\mathscr{B}}_{\widetilde{{\lambda}}_0})\geq \gamma(\overline{\mathscr{F}}_{\widetilde{{\lambda}}_0})=1.$$

On the other hand, $\lim_{\lambda\rightarrow +\infty} \gamma(\mathscr{B}_{\lambda})=0$ and hence by continuity there exists a real $\lambda_0$ such that $\gamma(\mathscr{B}_{\lambda_0})=1$.
Since $\mathscr{B}_{\lambda_0}$ is a compact positive operator, by Krein-Rutman Theorem there exists a nonnegative $\phi_{\lambda_0}(x)\in L^2(0,24)$ such that
\begin{equation}\label {eq:52}
\mathscr{B}_{\lambda_0} \phi_{\lambda_0}(x)=\phi_{\lambda_0}(x),
\end{equation}
i.e., $\sigma(\mathscr{B}_{\lambda_0})\neq \varnothing$. Since \eqref{snp}, the operator $\mathscr{B}_{\lambda}$ is semi-nonsupporting. From Theorem 4.3 of \cite{IVO}, we learn that $\gamma(\mathscr{B}_{\lambda})$ is strictly monotone decreasing with respect to real $\lambda$. This is equivalent to the uniqueness of the real eigenvalue of operator $\mathbb{A}$. That is, $\sigma(\mathbb{A})\neq \varnothing$.

When $\lambda>\lambda_0$ and $\gamma(\mathscr{B}_{\lambda})<\gamma(\mathscr{B}_{\lambda_0})=1$, $(\mathbb{I}-\mathscr{B}_{\lambda})^{-1}$ exists and is positive, and hence $R(\lambda,\mathbb{A})$ is positive from \eqref{eq:5555}. Thus, $\lambda_0$ is larger than any real part of the other eigenvalues.

(ii) Integrating along the characteristic, we obtain
\begin{eqnarray*}
p(a,t,x)=\left\{\begin{aligned}
&\mathscr{T}(a-t,0,t) p_0(a-t,x),\ a\geq t,\\
&\mathscr{T}(0,0,a) \int_0^{a_{\dag}} \beta(a)\int_{x-\eta}^{x+\eta} K(x,s)p(a,t-a,s) d s da,\ a<t.
\end{aligned}
\right.
\end{eqnarray*}
When $t\geq a_{\dag}$,
$$T(t)\phi(a,x)=\mathscr{T}(0,0,a)\int_{0}^{a_{\dag}}\beta(a)\int_{x-\eta}^{x+\eta} K(x,s)[T(t-a)\phi](a,s)ds d a.$$
Let $\phi_n$ weakly converge to $\phi$ in $X$. By the compactness of $\mathscr{T}(0,0,a)$, one has
$$\|\mathscr{T}(0,0,a)\int_{0}^{a_{\dag}}\beta(a)\int_{x-\eta}^{x+\eta} K(x,s)[T(t-a)(\phi_n-\phi)](a,s)dsda\|_{L^2([0,24])}\rightarrow 0.$$
On the other hand,
\begin{align*}
&\|\mathscr{T}(0,0,a)\int_0^{a_{\dag}} \beta(a) \int_{x-\eta}^{x+\eta} K(x,s)[T(t-a)(\phi_n-\phi)](a,s)dsda\|_{L^2([0,24])}\nonumber\\
&\leq \|\mathscr{T}(0,0,a)\| \|\int_0^{a_{\dag}} \beta(a) \int_{x-\eta}^{x+\eta} K(x,s)[T(t-a)(\phi_n-\phi)](a,s)dsda\|_{L^2([0,24])}\nonumber\\
&\leq M\|\phi_n-\phi\|_{L^2([0,24])}
\end{align*}
is bounded. Using the dominant convergence theorem, we get $\displaystyle\lim_{n\rightarrow \infty} \|T(t)(\phi_n-\phi)\|=0$; that is, $T(t)\phi_n$ converge strongly to $T(t)\phi$. Thus, $T(t)$ is compact.

By the results of \cite{Clement}, the semigroup $T(t)$ generated by $\mathbb{A}$, is a positive semigroup and
$$\lambda_0=s(\mathbb{A})=\omega_0(\mathbb{A})$$
where $s(\mathbb{A})$, $\omega_0(\mathbb{A})$ denote the spectral bound of $\mathbb{A}$ and the growth bound of the semigroup $T(t)$ respectively.
Since $T(t)$ is compact, it is known from  \cite{Clement} that $\omega_{ess}(\mathbb{A})=-\infty$.
Furthermore, from Theorem 9.10 in \cite{Clement}, it is easy to get that
$$\lambda_0=\{\lambda | Re \lambda = s(\mathbb{A})\}.$$
It means that $\lambda_{0}$ is a pole of the resolvent of $R(\lambda,\mathbb{A})$. Thus, $\gamma(\mathscr{B}_{\lambda_0})=1$ is a pole of $R(\lambda,\mathscr{B}_{\lambda_0})$. Moreover, by \eqref{snp}, one obtains that $\mathscr{B}_{\lambda_0}$ is a non-semisupporting operator. Since Theorem 1 in \cite{SAW}, one can obtain
that $\gamma(\mathscr{B}_{\lambda_0})=1$ is an algebraically simple eigenvalue of $\mathscr{B}_{\lambda_0}$. This is equivalent of
$\lambda_{0}$ being an algebraically simple eigenvalue of $\mathbb{A}$.

(2) From (1), we have that $\sigma(\mathbb{A})\cap\{\lambda|\lambda_0-\varepsilon\leq Re \lambda\leq \lambda_0\}=\lambda_0$, and $T(t)$ is a compact operator. Then from Theorem 5 of \cite{Yu}, there are constants $C$ and $T_{0}$, such that
\begin{align*}
\|T(t)-T(t)P_{\lambda_{0}}\|\leq Ce^{^{(\lambda_{0}-\epsilon)t}},  t\geq T_{0},
\end{align*}
where $T(t)$ is the semigroup generated by $\mathbb{A}$, $P_{\lambda_{0}}$ is the mapping from $X$ to $B_{\lambda_{0}}$, and
$B_{\lambda_{0}}$ is the eigenvalue space of $\lambda_{0}$ of $\mathbb{A}$. Furthermore,
\begin{align}\label {eq:77}
T(t)\phi= T(t)P_{\lambda_{0}}\phi+o(e^{^{(\lambda_{0}-\epsilon)t}}).
\end{align}
Since $\lambda_{0}$ is an algebraically simple eigenvalue of $\mathbb{A}$, then it is known from \cite{Hille} that
\begin{align}\label {eq:88}
P_{\lambda_{0}}\phi=\underset{\lambda\rightarrow \lambda_0}\lim(\lambda-\lambda_{0})R(\lambda,\mathbb{A})\phi.
\end{align}
Combining \eqref{eq:77} and \eqref{eq:88},
\begin{align*}\label {eq:99}
T(t)\phi= e^{\lambda_{0}t}\underset{\lambda\rightarrow \lambda_0}\lim(\lambda-\lambda_{0})R(\lambda,\mathbb{A})\phi+o(e^{^{(\lambda_{0}-\epsilon)t}}).
\end{align*}
Then, using the expression \eqref{eq:5555} of $R(\lambda,\mathbb{A})\phi$,
\begin{align*}
T(t)\phi(a,x)=&e^{\lambda_0 t}e^{-\lambda_0 a} \mathscr{T}(0,a) C_{\lambda_0} \int_0^{a_{\dag}} \beta(a) \int_{x-\eta}^{x+\eta} K(x,s)\int_0^a e^{-\lambda_0(a-\delta)} \mathscr{T}(\delta,a) \\&\phi(\delta,s)d\delta dsda+o(e^{(\lambda_0-\varepsilon)t}).
\end{align*}
\end{proof}

\begin{remark}
Here, we can see that Theorem \ref{th2} is a direct result of Theorem \ref{66666}, so the proof of Theorem \ref{th2} is complete.
\end{remark}

\section{Existence of steady states}
\noindent
As for the steady states \eqref{eq:04} satisfying \eqref{eq:05}, our main result is Theorem \ref{th3}.
In this section, we prove Theorem \ref{th3} directly according to Theorem \ref{66666}.

\begin{proof} Firstly, let $\lambda_{0}$ be as defined in Theorem \ref{66666}. Then, we look for the steady states \eqref{eq:04} in the following three cases according to the sign of $\lambda_{0}$.

(1)When $\lambda_{0}>0$, we argue this case by a contradiction. Assume that $p_{s}(a,x)$ is a nonnegative solution of \eqref{eq:04} satisfying \eqref{eq:05}.
It is easy to see that $p_{s}(a,x)=p(a,t,x)$ is also a solution of the following system
\begin{equation*}
\left\{ \begin{array}{lll}
Dp(a,t,x)- \delta \Delta p(a,t,x)+\mu (a)p(a,t,x) = 0,&(a,t,x)\in Q_{a_\dagger}, \\
p(a,t,0) =p(a,t,24),&(a,t)\in (0,a_\dagger)\times (0,T), \\
\partial_x p(a,t,0) =\partial_x  p(a,t,24),&(a,t)\in (0,a_\dagger)\times (0,T), \\
\displaystyle p(0,t,x)=\displaystyle \int_0^{a_\dagger} \beta(a)\displaystyle \int_{x-\eta}^{x+\eta}K(x,s)p(a,t,s)ds da,&(t,x)\in (0,T)\times (0,24), \\
\displaystyle p(a,0,x)=p_s(a,x) , &(a,x)\in (0,{a_\dagger} ) \times (0,24).
          \end{array}
  \right. \end{equation*}
Then by a result of Theorem \ref{th2}, one has the asymptotic expression
\begin{align*}
p(a,t,x)
=&e^{\lambda_{0}t}e^{-\lambda_{0}a}\mathscr{T}(0,a)C_{\lambda_{0}}\displaystyle \int_0^{a_\dagger} \beta(a)\displaystyle \int_{x-\eta}^{x+\eta}K(x,s)\\
&\displaystyle \int_0^{a}e^{-\lambda_{0}(a-\sigma)}\mathscr{T}(\sigma,a)p_{0}(\sigma,s)dsdad\sigma+o(e^{(\lambda_{0}-\epsilon)t}).
\end{align*}
Thus,
\begin{align*}
\| p_{s}(a,x) \|_{L^2((0,a_\dagger)\times(0,24))}=\underset{t\rightarrow +\infty}\lim \|p(a,t,x)\|_{L^2((0,a_\dagger)\times(0,24))}=+\infty
\end{align*}
which is a contradiction. Thus, there is no nonnegative solution of \eqref{eq:04} satisfying \eqref{eq:05}.

(2) When $\lambda_{0}=0$, it means that $0\in \sigma(\mathbb{A})$. From the definition of $\mathbb{A}$, every eigenfunction related to $0$ and its multiplications by any constant are solutions of \eqref{eq:04}.

Recalling \eqref{eq:52} from the proof of Theorem \ref{66666}, there is a nonnegative function $\phi_{\lambda_0}(x)\in L^2(0,24)$ such that
$$\mathscr{B}_{\lambda_0}(\phi_{\lambda_0}(x))=  \int_0^{a_\dagger} \beta(a)  \int_{x-\eta}^{x+\eta}K(x,s)e^{-\lambda_{0} a}\mathscr{T}(0,a)\phi_{\lambda_0}(s)ds da=\phi_{\lambda_0}(x).$$
By Lemma \ref{lemma:01}, one knows that $\mathscr{T}(0,a)$ is a bounded operator on $X$. Using Cauchy-Schwarz inequality, for arbitrary $x_{0} \in(0,24)$, one obtains
\begin{align*}
&|\phi_{\lambda_0}(x)-\phi_{\lambda_0}(x_{0})| \\
=&\left| \int_0^{a_\dagger} \beta(a)  \int_{x-\eta}^{x+\eta}K(x,s)e^{-\lambda_{0} a}\mathscr{T}(0,a)\phi_{\lambda_0}(s)ds da \right.\\
&\left. -\int_0^{a_\dagger} \beta(a)  \int_{x_{0}-\eta}^{x_{0}+\eta}K(x_{0},s)e^{-\lambda_{0} a}\mathscr{T}(0,a)\phi_{\lambda_0}(s)ds da \right| \\
\le &\|\beta(a)\|_{L^{\infty}(0,a_{\dagger})} \left|\int_0^{a_\dagger}  \int_{x-\eta}^{x+\eta}K(x,s) \mathscr{T}(0,a)\phi_{\lambda_0}(s)ds da \right.\\
&\left.-\int_0^{a_\dagger}  \int_{x_{0}-\eta}^{x_{0}+\eta}K(x_{0},s)\mathscr{T}(0,a)\phi_{\lambda_0}(s)ds da \right| \\
\leq &\|\beta(a)\|_{L^{\infty}(0,a_{\dagger})} \left| \int_0^{a_\dagger}  \int_{x-\eta}^{x+\eta}(K(x,s)-K(x_{0},s)) \mathscr{T}(0,a)\phi_{\lambda_0}(s)ds da\right|  \\
&+\|\beta(a)\|_{L^{\infty}(0,a_{\dagger})} \left| \int_0^{a_\dagger}  \int_{x_{0}-\eta}^{x+\eta}K(x_{0},s)\mathscr{T}(0,a)\phi_{\lambda_0}(s)ds da\right| \\
&+\|\beta(a)\|_{L^{\infty}(0,a_{\dagger})} \left|\int_0^{a_\dagger} \int_{x_{0}+\eta}^{x+\eta}K(x_{0},s)\mathscr{T}(0,a)\phi_{\lambda_0}(s)ds da \right|\\
\leq & \|\beta(a)\|_{L^{\infty}(0,a_{\dagger})}\|K(x,s)-K(x_{0},s)\|_{L^2(x-\eta,x+\eta)} \|\mathscr{T}(0,a)\phi_{\lambda_0}(s)\|_{L^2((0,a_{\dagger})\times(0,24))}\\
&+\|\beta(a)\|_{L^{\infty}(0,a_{\dagger})} \left(\int_{x_{0}-\eta}^{x+\eta}|K(x_{0},s)|^{2} d s\right)^{\frac{1}{2}}  \|\mathscr{T}(0,a)\phi_{\lambda_0}(s)\|_{L^2((0,a_{\dagger})\times(0,24))}\\
&+\|\beta(a)\|_{L^{\infty}(0,a_{\dagger})} \left(\int_{x_{0}+\eta}^{x+\eta}|K(x_{0},s)|^{2} d s\right)^{\frac{1}{2}}  \|\mathscr{T}(0,a)\phi_{\lambda_0}(s)\|_{L^2((0,a_{\dagger})\times(0,24))}\\
\leq& C\|\beta(a)\|_{L^{\infty}(0,a_{\dagger})}\|K(x,s)-K(x_{0},s)\|_{L^2(x-\eta,x+\eta)} \|\phi_{\lambda_0}(s)\|_{L^2(0,24)}\\
&+C\|\beta(a)\|_{L^{\infty}(0,a_{\dagger})} \left(\int_{x_{0}-\eta}^{x+\eta}|K(x_{0},s)|^{2} d s\right)^{\frac{1}{2}}  \|\phi_{\lambda_0}(s)\|_{L^2(0,24)}\\
&+C\|\beta(a)\|_{L^{\infty}(0,a_{\dagger})} \left(\int_{x_{0}+\eta}^{x+\eta}|K(x_{0},s)|^{2} d s\right)^{\frac{1}{2}}  \|\phi_{\lambda_0}(s)\|_{L^2(0,24)}\\
\rightarrow& 0, \text{ as $x\rightarrow x_0$}.
\end{align*}
Thus, $\phi_{\lambda_0}(x)$ is continuous about $x$.
Then, from the proof of Lemma \ref{lemma3.2},
it is easy to check that
$$\phi(a,x)=\mathscr{T}(0,a)\phi_{\lambda_0}(x)$$
is an eigenfunction of the eigenvalue $\lambda_{0}=0$ of $\mathbb{A}$. Therefore, the steady states are
$$p_s(a,x)=c\mathscr{T}(0,a)\phi_{\lambda_0}(x)\ge 0, \text{ for any constant $c>0$}.$$
By a result of Lemma \ref{lemma:01}, we know that $\mathscr{T}(0,a)$ is strongly continuous with respect to $a$.
Hence, $p_s(a,x)$ is continuous about $a$, $x$ in $(0,a_\dagger)\times (0,24)$.

Consider smooth function $v(a,x)$ such that $v(a,x)=e^{\int_0^{a} \mu(\rho)d\rho} p_s(a,x)\ge 0$ a.e $(a,x)\in(0,a_{\dagger})\times (0,24)$. Then, from \eqref{eq:04}, $v(a,x)$ satisfies
\begin{equation}\label {eq:8888}
\left\{ \begin{array}{lll}
\partial_a v- \delta \Delta v= 0,\hskip 4.29cm (a,x)\in (0,a_{\dagger})\times (0,24), \\
v(a,0)=v(a,24),\hskip 6.29cm a\in (0,a_{\dagger}), \\
\partial_x v(a,0) =\partial_x  v(a,24),\hskip 5.49cm a\in (0,a_{\dagger}), \\
v(0,x)= \int_0^{a_\dagger} \beta(a) \int_{x-\eta}^{x+\eta}K(x,s) e^{-\int_0^{a} \mu(\rho)d\rho}v(a,s)ds da, \ x\in (0,24). \\
\end{array}
\right. \end{equation}
From the strong maximum principle, $v(a,x)>0$ for $(0,a_{\dagger})\times (0,24)$. Then, $v(0,x)=\int_0^{a_\dagger} \beta(a) \int_{x-\eta}^{x+\eta}K(x,s) e^{-\int_0^{a} \mu(\rho)d\rho}v(a,s)ds da>0$ for $x\in (0,24)$. Assume by contradiction that $v$ attains its minimum $0$ at $(a_0,0)$, that is, $v(a_0,0)=0$ for some $a_0\in(0,a_{\dagger})$. Then, $\partial_a v(a_0,0)=0$ and $\partial_x v(a_0,0)\ge 0$. Since $v(a,o)=v(a,24)$ for $a\in (0,a_{\dagger})$, one has that $v(a_0,24)=0$ and $\partial_a v(a_0,24)=0$, $\partial_x v(a_0,24)\le 0$. Since $\partial_x v(a,0)=\partial_x v(a,24)$ for $a\in (0,a_{\dagger})$, we obtain that $\partial_x v(a_0,0)=\partial_x v(a_0,24)=0$. Then, $\Delta v(a_0,0)=\partial_{xx} v(a_0,0)>0$ since $v(a,x)>0$ for $(0,a_{\dagger})\times (0,24)$. Thus, $(\partial_a v-\delta \Delta v)(a_0,0)<0$ which is a contradiction of the first equation of \eqref{eq:8888}. So that, $v(a,0)$, $v(a,24)>0$ for $a\in(0,a_{\dagger})$. By $v(0,x)=\int_0^{a_\dagger} \beta(a) \int_{x-\eta}^{x+\eta}K(x,s) e^{-\int_0^{a} \mu(\rho)d\rho}v(a,s)ds da$, one also has that $v(0,0)$, $v(0,24)>0$. Therefore, we can conclude that for any $a_1<a_{\dagger}$,
$$p_s(a,x)=e^{-\int_0^{a}\mu(\rho)d\rho} v(a,x)>0, \text{ a.e. in $[0,a_1]\times [0,24]$}$$
since $\int_0^a \mu(\rho)d\rho<\infty$ for $a<a_{\dagger}$. Finally, there exists $\rho_0>0$ such that
$$p_s(a,x)\ge \rho_0>0,\ \text{a.e. } (a,x)\in (0,a_1)\times (0,24).$$

(3) When $\lambda_{0}<0$, it follows from the arguments of (1) that
\begin{align*}
\| p_{s}(a,x) \|_{L^2((0,a_\dagger)\times(0,24))}=\underset{t\rightarrow +\infty}\lim \|p(a,t,x)\|_{L^2((0,a_\dagger)\times(0,24))}=0.
\end{align*}
Thus,
\begin{equation*}
p_{s}(a,x)=0 \ \ a.e. \  (a,x)\in (0,{a_{\dagger}})\times (0,24).
\end{equation*}
\end{proof}

\noindent{\bf Acknowledgements:} C.P.F. thanks to 14/07615-3 S\~ao Paulo Research Fundation (FAPESP).


\begin{thebibliography}{99}
\bibitem{Ains1} B.E. Ainseba, M. Langlais. Sur un probl\`eme de contr\^ole d'une population
structur\'ee en \^age et en espace, C. R. Acad. Sci. Paris S\'er. I. 323 (1996) 269--274.
\bibitem{Ains2} B.E. Ainseba, M. Langlais. On a population dynamics control problem with age dependence
and spatial structure, J. Math. Anal. Appl. 248 (2000) 455--474.
\bibitem{Ains3} B.E. Ainseba, S. Ani\c{t}a.  Local exact controllability of the age-dependent population
dynamics with diffusion, Abstr. Appl. Anal. 6 (2001) 357--368.
\bibitem{Amann} H. Amann. Dual semigroups and second-order linear elliptic boundary value problems, Israel. J. Math. 45 (1983) 225--254.
\bibitem{As}Ani\c{t}a S. Analysis and control of age-dependent population dynamics. Springer Science \& Business Media, 2000.
\bibitem{Chan} W.L. Chan, B.Z. Guo. On the semigroups for age-size dependent population dynamics with spatial diffusion,
Manuscripia. Math. 66 (1990) 161--181.
\bibitem{Clement} P. Cl\'{e}ment, H. Heijmans, S. Angenent. One parameter semigroups, CWI Monographs 5 (1987) 1--312.
\bibitem{Cara} E. Fern\'{a}ndez-Cara.  Null controllability of the semilinear heat equation,
ESAIM:COCV 2 (1997) 87--103.
\bibitem{Guo} B.Z. Guo, W.L. Chan. On the semigroup for age dependent population Dynamics with spatial diffusion,
J. Math. Anal. Appl. 184 (1994) 190--199.
\bibitem{Garr}  M.G. Garroni, M. Langlais.  Age dependent population diffusion with external
  constraints, J. Math. Biol. 14 (1982) 77--94.
  \bibitem{Gatton2013} M.L. Gatton, N. Chitnis, T. Churcher, M.J. Donnelly, A.C. Ghani, H.C.J Godfray, F. Gould, I. Hastings, J. Marshall, H. Ranson, M. Rowland, J. Shaman, S.W. Lindsat. The importance of mosquito behavioral adaptations to malaria control in Africa, Evolution 64 (2013) 1218--1230.
\bibitem{Gurt}  M.E. Gurtin.  A system of equations for age dependent population diffusion, J.
Theor. Biol. 40 (1972) 389--392.
\bibitem{Hille} E. Hille, R.S. Phillips. Function analysis and semigroups,  American Mathematical Soc. 31, 1996.
\bibitem{Ladyzenskaja} O.A. Lady\u{z}enskaja, V.A Solonnikov. Linear and quasilinear equations of parabolic type, American Mathematical Soc. 23, 1988.
\bibitem{Langlais} M. Langlais. Large time behavior in a nonlinear age-dependent population dynamics problem with spatial diffusion, J. Math. Biol. 26 (1988) 319--346.
\bibitem{IVO} I. Marek. Frobenius theory of positive operators: Comparison theorems and applications, SIAM J. Appl. Math. 19 (1970) 607--628.
\bibitem{Nagel} R. Nagel, et al. One-parameter semigroups of positive operators, Springer Verla, 1986.
\bibitem{Pazy} A. Pazy. Semigroups of linear operators and applications to partial differential equations,
Springer Science \& Business Media, 2012.
\bibitem{SAW} I. Sawashima. On spectral properties of some positive operators, Nat. Sci. Report Ochanomicu Univ. 15 (1964) 53--64.
\bibitem{Song} J. Song et al. Spectral properties of population operator and asymptotic behaviour of population semigroup, Acta Mathematica Scientia, 2 (1982) 139--148.
\bibitem{Webb} G.F. Webb. Theory of nonlinear age-dependent population dynamics, CRC Press, 1985.
\bibitem{Ye} Q. Ye, Z. Li. Introduction to reaction-diffusion equations, Science Press, 1994.
\bibitem{Yu} J.Y. Yu, B.Z. Guo, G.T. Zhu. Asymptotic expression in $L[0,r_m]$ for population evolution and controllability of population system, J. System Sci. Math. Scis. 7 (1987) 97--104.
\end{thebibliography}
\end{document}